\providecommand{\keywords}[1]{\textbf{\small{Key words:}} #1}
\providecommand{\AMS}[1]{\textbf{\small{AMS subject classifications:}} #1}
\definecolor{colorBlack}{rgb}{0.,0.,0.}
\def\black{\color{colorBlack}}
\newcommand{\calO}{\mathcal{O}}
\newcommand{\calR}{\mathcal{R}}
\newcommand{\R}{\mathbb{R}}
\newcommand{\C}{\mathbb{C}}
\newcommand{\rmi}{\mathrm{i}}
\newcommand{\eps}{\varepsilon}
\newcommand{\w}{r}
\newcommand{\F}{\mathbf{F}}
\newcommand{\M}{\mathbf{M}}
\def\eps{\varepsilon}
\def\L{L_{pp}}
\newcommand{\bT}{\textbf{T}}
\newcommand{\ba}{\textbf{a}}
\newcommand{\bb}{\textbf{b}}
\newcommand{\bs}{\textbf{s}}
\newcommand{\bv}{\textbf{v}}
\newcommand{\T}{Z}
\newcommand{\RR}{\Lambda}
\def\SS{\Gamma}
\newcommand{\tu}{\tilde{u}}
\def\tt{\tilde{\tau}}
\newcommand{\tK}{\widetilde{K}}
\newcommand{\tC}{\widetilde{C}}
\newcommand{\teps}{\tilde{\eps}_r}
\newcommand{\epsrd}{\underline{\eps}_\w}
\newcommand{\epspsid}{\underline{\eps}_\psi}
\newcommand{\XT}{X_P}
\newcommand{\xTs}{x_{T_s}}
\newcommand{\barm}{\bar{m}}
\newcommand{\mL}{m}
\newcommand{\cA}{k_{20}}
\newcommand{\cB}{k_{21}}
\newcommand{\ctA}{k_{10}}
\newcommand{\ctB}{k_{11}}
\newcommand{\cC}{k_{02}}
\newcommand{\cZ}{k_{01}}
\newcommand{\rp}{\rho} 
\newcommand{\ampc}{b_r} 
\newcommand{\kk}{z}
\newcommand{\target}{*}
\DeclareMathOperator{\sgn}{sgn}
\theoremstyle{plain}
\newtheorem{theorem}{Theorem}[section] 
\newtheorem{remark}[theorem]{Remark} 
\numberwithin{equation}{section} 
\title{Bifurcation control for a ship maneuvering model with nonsmooth nonlinearities}
\date{June 26, 2022}
\author{Miriam Steinherr Zazo\thanks{University of Bremen, Germany, Department 3 -- Mathematics, m.steinherr@uni-bremen.de}  \and Jens D.~M.~Rademacher\thanks{University of Bremen, Germany, Department 3 -- Mathematics, jdmr@uni-bremen.de}}
\begin{document}
\maketitle

\begin{abstract}
	We consider a widely used form of models for ship maneuvering, whose nonlinearities entail continuous but nonsmooth second-order modulus terms. For such models bifurcations of straight motion are not amenable to standard center manifold reduction and normal forms. Based on a recently developed analytical approach, we nevertheless determine the character of local bifurcations when stabilizing the straight motion course with standard proportional control. For a specific model class we perform a detailed analysis of the linearization to determine the location of these bifurcations in the control parameter space and its dependence on selected design parameters. By computing the analytically derived characteristic parameters, we find that `safe' supercritical Andronov--Hopf bifurcations are typical. Through numerical continuation we provide a more global bifurcation analysis, which identifies the arrangement and relative location of stable and unstable equilibria and periodic orbits.
\end{abstract}

\keywords{{\small Stability, Nonsmoothness, Hopf Bifurcation, Lyapunov Coefficient}}

\medskip
\AMS{{\small 34H15, 34H20, 37N35, 93D20}}

\section{Introduction}
{\black A typical task in ship maneuvering is to maintain a specified heading. In standard differential equation models for ship maneuvering such straight motion appears as an equilibrium point. We consider here the frequent situation in which this is unstable in absence of control, and study the effectiveness of a standard proportional control to stabilize it. 
	We do not specifically design the control for this task, which would be a broader goal in bifurcation control \cite{ABED1986,CMW2000,Kang2003Controllability}. Instead, we identify the possibility to stabilize the straight motion in terms of the given control gains. 
	Our main interest lies in the resulting nonlinear effects. The given control combines the steering angle $\eta$ with the yaw angle $\psi$, where $\psi=0$ is the desired heading, and the yaw velocity $r=\dot \psi$. The standard proportional yaw damping and yaw restoring control takes the form $\eta=\eps_\w r+ \eps_\psi \psi$ with control gains $\eps_\w$ and $\eps_\psi$ \cite{PAPOULIAS1994,STC2007,Tigkas2019}. Stabilization on a linear level identifies gain margins at which linear growth rates of perturbations switch from positive to negative \cite{PAPOULIAS1994}. These gain margins form a stability boundary curve in the control parameter plane.
	
	It turns out that for the selected model class the crossing from the unstable to the stable region is either a pitchfork or an Andronov--Hopf bifurcation. It is well known that these bifurcations are generically either subcritical or supercritical \cite{Carr}. 
	For a subcritical bifurcation, the linearly stable state co-exists with a nearby unstable bifurcated state, i.e., the effect of bifurcation is inside the stable region in control parameter space. Although linear analysis predicts stability, the basin of attraction can be very small. When random perturbations exceed this small basin, local information is insufficient to determine how the system further evolves. 
	For a supercritical bifurcation, the unstable state co-exists with a stable bifurcated state before parameters have reached the stable region. In this case the linear stability boundary is a safe estimate for stabilization. It is therefore important to be able to determine whether the bifurcation in a control scenario is supercritical or subcritical \cite{PAPOULIAS1994}.
	
	The criticality can be inferred from the so-called first Lyapunov coefficient for the Hopf bifurcation or its analogue for the pitchfork bifurcation. 
	If the nonlinear terms are smooth, then this coefficient can be computed by means of a center manifold reduction and normal form analysis \cite{Carr}. 
	We refer to \cite{ABED1986,CMW2000,GS19} and the references therein for studies in the context of smooth ship models and their relation to bifurcation control. We also mention that various studies have numerically investigated bifurcations and the resulting branches of solutions for smooth models of ship maneuvering without focus on the Lyapunov coefficient  \cite{SB2014,Spy2017,SpyrouThompson2000,Tigkas2019}. 
	However, the standard class of models for ship maneuvering that we consider features nonlinearities with continuous nonsmooth terms \cite{Abkowitz1969,StandShips2006,InitialPaper,FossenHandbook,STC2007,ToxopeusPaper,ToxopeusThesis}. Here the smooth theory is not applicable in general. 
	In \cite{STC2007}, a purely numerical study of bifurcations for such a model is presented, which avoids the problem and does not determine Lyapunov coefficients. 
	
	Originally motivated by these ship models, we have developed a theoretical framework that admits to derive the first Lyapunov coefficient for a broad class of continuous nonsmooth models \cite{SteinMacher2020}. The main purpose of the current paper is to illustrate the application of this theory to models of ship maneuvering. As a preparatory part for this nonlinear analysis, but of independent interest, we provide a detailed analytical study of stability boundaries for the selected model class. This provides explicit formulas for stability boundaries in terms of the control gains and ship design parameters. In particular, the location of the propulsion force on the hull has an important impact on the geometry and location of the stability boundary. 
	Concerning criticality of bifurcations, we find that in the considered models all are supercritical and thus safe.  
	In order to gain additional insight into the arrangement of the bifurcating periodic orbits more globally in control parameter space, we perform numerical continuation studies. Some of the bifurcating solutions have broadly varying yaw angle, so that for their study we need to account for the global cylindrical topology of phase space. We find that the continuation of these periodic solutions and those from the Hopf bifurcation terminate in heteroclinic bifurcations that involve the cylindrical geometry. 
	
	\medskip
	This paper is structured as follows. 
	We present a brief background regarding marine craft hydrodynamics in \S\ref{Marine_Craft_Hydro}. Here we also present the specific equations of motion for the ship model considered in this paper and detail the kind of control used.
	In the main section \S\ref{Theoretical_Analysis}, we theoretically investigate the stability of the straight line motion and bifurcations from it. Included in this  analysis is the impact of some ship design parameters.
	The numerical bifurcation and continuation analysis is presented in \S\ref{Numerical_Bif_An}, illustrating the preceding theoretical study. For the continuation of solutions that have widely varying yaw angle, the control law is modified to depend periodically on the yaw angle. We show selected periodic orbits in phase space as well as the corresponding ship tracks in Earth-fixed coordinates.
	Finally, in \S\ref{Discussion} we discuss the results and present possible directions for further research.
}

\section{Model Equations and Background}
\label{Marine_Craft_Hydro}

Here we briefly discuss ingredients that are most relevant for our subsequent analysis. The kinetics is based on Newton's second law and Euler's axiom{\black . With these, the rigid-body equations of motion take the form $\M\dot{\bv} = \F(\bv)$, where $\bv$ is the vector of ship-fixed velocities and $\dot{\bv}$ its time derivative. The matrix $\M$ contains the mass coefficients, which include the added mass due to the water displacement and moments of inertia. The vector $\F$ contains the} forces from the hull, rudder,  propeller and hydrodynamics, as well as the Coriolis term. {\black Detailed derivations of the equations of motion for a marine craft can be found in, e.g., \cite{FossenHandbook}.}

In the modeling of the hydrodynamic forces, {\black we are interested in} the nature of the arising nonlinear terms since their nonsmooth character is decisive in the analysis of bifurcations. 
One approach to the nonlinear terms follows the drag equation for high Reynolds number given by 
$$F_D = -\frac{1}{2}\rho C_D A u\abs{u},$$
where $u$ is the velocity of the body, $\rho$ the density of the water, $C_D$ the drag coefficient, and $A$ the effective drag area, \cite{FossenHandbook}. This equation is a consequence of {\black (I)} the experimental observation that the drag force $F_D=F_D(\rho,A,u)$ is a function of $\rho,A,u${; \black (II)} the fact that{\black,} as an opposing force, {\black $F_D$} must be odd with respect to $u$; and {\black (III)} that dimensional analysis in a power law ansatz $F_D=\rho^\alpha A^\beta u^\gamma C$ for $u>0$ with exponents $\alpha,\beta,\gamma\in\R$ and a constant $C$, implies $\alpha=\beta=1$, $\gamma=2$. Indeed, polynomial regression studies {\black on the representation} of hydrodynamic forces confirm that $u|u|$, $v|v|$, $r|r|$ and the mixed terms $v|r|$, $r|v|$, are the relevant higher-order terms \cite{OntheOrder,Wolff81}. These \textit{second-order modulus terms} can be regarded as square law damping in this context. 
{\black In addition, these nonlinear terms can be motivated by a Taylor expansion to second-order, where the absolute value is used to correct the signs \cite{StandShips2006}.} 
Ship models with third-order Taylor approximations {\black are also used in the literature}, e.g., {\black \cite{PAPOULIAS1995,Spy2017,Tigkas2019}}. 
However, when it comes to bifurcation analysis, there is a significant difference between the second-order modulus and cubic terms as discussed in detail in \cite{SteinMacher2020} {\black and reflected in our bifurcation analysis of} \S\ref{NonLinearANalysis}.

The specific model equations that we will investigate are a variation of the $3$ {\black degree-of-freedom} model from \cite{StandShips2006,ToxopeusThesis}, for which some basic analysis was conducted in \cite{InitialPaper}. The model parameters stem from the `Hamburg Test Case' (HTC) characteristics,  
{\black that we collect in Appendix \ref{HTC_param} as needed.} 
We adopt these values throughout, except when analyzing the impact of selected parameter changes.
The general {\black dimensional} $3$ {\black degree-of-freedom} model takes the form
\begin{equation}
	\begin{pmatrix}
		\barm+\barm_{uu} 	& 0 		& 0 \\
		0 			& \barm+\barm_{vv}	& \barm_{v\w} \\
		0 			& \barm_{\w v}	&\bar{I}_z+ \barm_{\w\w}
	\end{pmatrix}
	\begin{pmatrix}
		\dot{u}\\
		\dot{v}\\
		\dot{\w}\\
	\end{pmatrix}=
	\begin{pmatrix}
		\barm v\w+X\\
		-\barm{u}\w+Y\\
		N
	\end{pmatrix},
	\label{OriginalSys}
\end{equation}
where $u,v$ and $\w$ are the surge, sway and yaw velocities, respectively. The external forces $X,Y,N$ for the `rudder model' of \cite{ToxopeusThesis} are of the form $X=X_H+X_R+X_P$, $Y=Y_H+Y_R$, $N=N_H+N_R$ with the contributions from the hull (H), the rudder (R) and the propeller (P) of the vessel. 

In order to facilitate the presentation of the mathematical method and analysis, we combine the rudder and propeller forces as in \cite{InitialPaper} into a simpler `thruster force', which gives
\begin{equation}
	X = X_H + \XT\cos{\eta},\quad
	Y = Y_H + \XT\sin{\eta}, \label{thruster_forces} \quad
	N = N_H + \bar{x}_T\XT\sin{\eta}.
\end{equation}
The thruster acts on the hull at a longitudinal position $\bar{x}_T\in[-\frac 1 2\L,\frac 1 2\L]$, measured from the midship of the marine craft towards the front, and exerts a {\black propulsion} force in {\black the steering} direction $\eta$ of amplitude given by the propeller force $\XT$; {\black see Fig. \ref{Ship_xT}. {\black This is similar to the rudder force in \cite{STC2007}, where $\XT$ is constant while we will consider nonlinear $\XT$.} 
	The variable $\eta$ represents the rudder angle and is an external input, which will be assumed to obey a specific control law design.} 
\begin{figure}
	\centering
	\includegraphics[width= 0.37\linewidth]{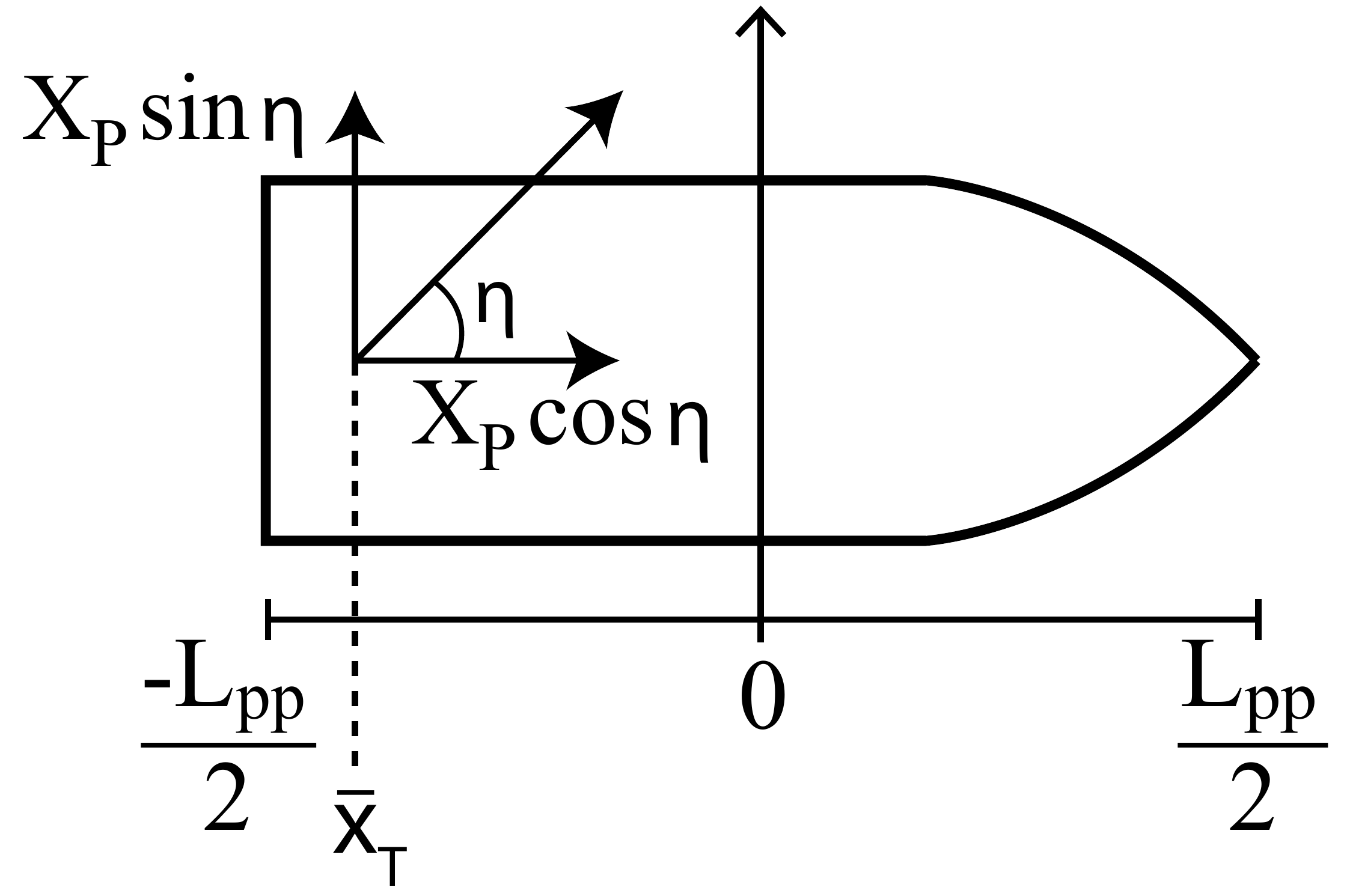}
	\caption{{\black Sketch of a ship model illustrating the location of the thruster and the thruster force components for the surge, $X_P\cos\eta$, and the sway, $X_P\sin\eta$.}}
	\label{Ship_xT}
\end{figure}

{\black For the hull forces we take the general non-dimensional form of \cite{StandShips2006,ToxopeusThesis}. Compared to \eqref{OriginalSys}, we scale $u,v$ by the ship length $\L$ to unit 1/s; the full non-dimensionalization follows below. With $V=\sqrt{u^2+v^2}$, the forces read}
\begin{align*}
	{X}_{H} =&  X_{u|u|} u^2 + X_{\beta \gamma} v{\w} ,\\
	{Y}_{H} =&  Y_{\beta} u v + Y_{\gamma} u{\w} + Y_{\beta |\beta|} v |v| + Y_{\gamma |\gamma|} {\w} |{\w}| 
	+ Y_{\beta |\gamma|} v |{\w}| + Y_{|\beta| \gamma} |v| {\w} +  Y_{ab} u^{3} |v|v V^{-3} ,\\
	{N}_{H} =&  N_{\beta} uv + N_{\gamma} u{\w} + N_{u' \gamma c} u|\w|\w V^{-1}   
	+ N_{\gamma |\gamma|} {\w} |{\w}| + N_{\beta|\beta|} v|v|   \\&
	+ N_{\beta \beta \gamma } {\w} v^2 V^{-1} + N_{\beta \gamma \gamma } v{\w}^2 V^{-1} + N_{ab} u v^{3} V^{-2}.
\end{align*}
{\black To ease the exposition, some exponents of terms in $Y_H$ and $N_H$ are already specified to the HTC from which we choose as default  all values of coefficients; see Appendix \ref{HTC_param}. Several nonlinear terms in} these forces implement the aforementioned second-order modulus form of the drag {\black analogous to \cite{StandShips2006,FossenHandbook,STC2007}. The corresponding non-dimensionalization, up to time, of \eqref{OriginalSys} has the same form, but with the dimensionless mass coefficients listed in Table~\ref{Mass_coeffs}. Within $N$ the thruster position turns into the dimensionless $x_T\in[-1/2, 1/2]$. As default value we take the rudder position of the HTC, which gives $x_T\approx -0.49$; see \cite{ToxopeusThesis}.}

The propeller force taken from \cite{ToxopeusThesis} {\black in the corresponding scaling reads} 
\begin{align}\label{e:XP}
	X_P = 2(1-t) \frac{\L}{T}{n_p}^2{D_p}^4 K_T, \quad K_T := \sum_{i=0}^{5}K_{Ti}\left(\frac{1-w}{n_pD_p}u\right)^i,
\end{align}
{\black where $n_p{>0}$ is the propeller frequency,  ${D}_p:=\bar{D}_p/\L$ is the non-dimensional propeller diameter, and all other parameters are by default those of the HTC; see Appendix \ref{HTC_param}. 
	The propeller frequency can be removed by a non-dimensionalization of time. Indeed, all forces scale quadratically with respect to the velocity $u=n_p\check{u}$, e.g., $X_P={n_p}^2\check{X}_P(\check{u})$ with $\check{X}_P$ independent of $n_p$. 
	This implies the natural relation that all velocities are proportional to the propeller speed: rescaling all velocities and time proportional to $n_p$ gives a factor ${n_p}^2$ on both sides of \eqref{OriginalSys} so that $n_p$ is removed upon division. Concerning the (dimensionless) propeller diameter $D_p$, we observe that scaling $u=D_p\tilde{u}$ gives $X_P={D_p}^4\widetilde{X}_P(\tilde{u})$, where $\widetilde{X}_P$ is independent of $D_p$. However, not all forces scale in the same way and, as we will explain below, the value of $D_p$ enters into the analysis of the model. 
	In the remainder of this paper, we discuss the fully non-dimensional equations using the same notation and fixing $n_p=1$ in \eqref{e:XP}.}

The model is completed by the control law for the steering angle $\eta${\black. Here we choose a standard combination $\eta =\eta_1+\eta_2$} of yaw damping {\black proportional control (P-control)} $\eta_1$ and yaw restoring P-control $\eta_2$; we refer to \cite{FossenHandbook} for a general background. The first consists of adding a proportional compensation to the yaw velocity $\w$ by setting $\eta_1 = \eps_\w(\w-\w_\target)$, with target yaw velocity $\w_\target$ and control parameter $\eps_\w\geq 0$. Analogously, for the yaw angle $\psi$ we have $ \eta_2 =  \eps_\psi(\psi-\psi_\target)$ with target yaw angle $\psi_\target$ and second control parameter $\eps_\psi\geq 0$. Notably, for $\eps_\psi>0$ this requires to add $\dot{\psi}=\w$ as a fourth equation to \eqref{OriginalSys}. 
{\black The control parameters $\eps_\w, \eps_\psi\geq 0$ are also referred to as control gains. For} controlling a straight line trajectory, {\black we have $r_\target=0$ and, since \eqref{OriginalSys} is otherwise independent of $\psi$, we may set $\psi_\target=0$. This gives}
\begin{equation}
	\eta = \eps_\w\w+\eps_\psi\psi,
	\label{Pcontrol}
\end{equation}
{\black which is a standard P-controller that also appears in, e.g., \cite{PAPOULIAS1994,STC2007,Tigkas2019}.}

In the following we study the non-dimensional $4$D `thruster model'
\begin{equation}
	\begin{pmatrix}
		m+m_{uu} 	& 0 		& 0 		&0\\
		0 			& m+m_{vv}	& m_{v\w} 	&0\\
		0 			& m_{\w v}	&I_z+ m_{\w\w}	&0\\
		0			& 0			& 0 		&1
	\end{pmatrix}
	\begin{pmatrix}
		\dot{u}\\
		\dot{v}\\
		\dot{\w}\\
		\dot{\psi}\\
	\end{pmatrix}=
	\begin{pmatrix}
		m v\w+X_H+\tau(u)\cos\eta\\
		-m {u}\w+Y_H+\tau(u)\sin\eta\\
		N_H+x_T\tau(u)\sin\eta\\
		\w
	\end{pmatrix},
	\label{4DSystem}
\end{equation}
{\black where we denote $\tau(u):=X_p$ for later convenience. For $\eps_\psi=0$ we also consider the 3D reduction to the invariant $(u,v,r)$-subsystem, i.e., the non-dimensionalized form of \eqref{OriginalSys}. It turns out to be relevant that the propeller force is monotone decreasing in the surge $u{>0}$, i.e.,
	\begin{equation}\label{e:xp_prime}
		\partial_u \tau(u) < 0.
	\end{equation}
	This is equivalent to $\partial_u K_T < 0$ for the HTC since in \eqref{e:XP} we have $t,w<1$. The fact that $\partial_u K_T < 0$ holds for $u>0$ is not obvious, but it can be verified numerically for the HTC. 
	
	We write \eqref{4DSystem} more compactly as $M\dot{\bv}=F(\bv)$, where $M$ is the matrix on the left hand side of \eqref{4DSystem},  containing the non-dimensionalized mass matrix from \eqref{OriginalSys}, $F(\bv)$ is the right hand side, and $\bv=(u,v,r,\psi)^\intercal$. Since $M$ is invertible,} the equivalent explicit compact form of \eqref{4DSystem} reads 
\begin{equation}\label{e:4DexplicitODE}
	\dot{\bv} = M^{-1} F(\bv).
\end{equation}

\section{Theoretical Analysis}\label{Theoretical_Analysis}

In this section, we analyze the impact of the yaw damping and yaw restoring control \eqref{Pcontrol} on the stability and bifurcation of {\black the equilibrium} straight motion. 
We include variations of the selected design parameters $D_p$, the propeller diameter, and $x_T$, the thruster position, in order to illustrate the methodology. We start discussing the existence of the straight motion as an equilibrium in the ship-fixed coordinates, then turn to the linear stability, and finally analyze the resulting bifurcations.

\subsection{Equilibrium Straight Motion}
\label{Stability_s_m}

The straight motion of the ship with constant speed corresponds to 
an equilibrium point given by $(u_0,v_0,\w_0,\psi_0)=(u_0,0,0,0)$, with $u_0>0$ and the reference direction $\psi_0=0$ of system \eqref{4DSystem}. 
Equilibria are those $\bv=(u,v,r,\psi)^\intercal$ for which $F(\bv)$ in \eqref{e:4DexplicitODE} vanishes. Setting $v=\w=\psi=0$, the last three components of $F(\bv)$ become zero, which in fact holds for any $\psi_0$ if $\eps_\psi=0$, so that in this case we obtain a line of equilibrium motion in any direction. For $\eps_\psi\neq 0$, this is constrained to the reference direction $\psi_0=0$. The remaining first component of $F(\bv)$ now reads $X_H + \tau(u)\cos(0)$, which gives the condition for the equilibrium straight velocity $u_0$ as
\begin{equation}
	\label{u_eq}
	X_{u|u|}{u_0}^2+\tau(u_0)=0,
\end{equation}
independent of the control parameters $\eps_\w, \eps_\psi$. This equation possesses a unique positive solution if \eqref{e:xp_prime} holds, i.e., $\partial_u\tau(u)<0$, since $\tau(0)>0$ and $X_{u|u|}<0$, and therefore the left hand side is strictly decreasing for $u>0$. These conditions hold for the HTC values {\black (see Appendix \ref{HTC_param})}, and then $u_0=u_0^\mathrm{HTC}\approx {\black 0.03}$ 
solves \eqref{u_eq} uniquely. {\black It also follows from \eqref{u_eq} that $\tau(u_0)>0$.}

Regarding $D_p$ and $x_T$, the equilibrium location is independent of $x_T$ since this does not appear in \eqref{u_eq}. For the propeller diameter $D_p$, we scale $u=D_p\tu$ and $\tilde{\tau}(\tu):= {D_p}^{-4}\tau({D_p\tu})$, which is independent of $D_p$. Then \eqref{u_eq} becomes $X_{u|u|}{\tu_0\hspace{0.0cm}}^2+{D_p}^2\tilde{\tau}(\tu_0)=0$, where the first {addend} is independent of $D_p$. Hence, the equilibrium depends on $D_p$, but 
for large values its location is approximately proportional to $D_p$. Indeed, the rescaled \eqref{u_eq},
upon multiplication by $({D_p}^2X_{u|u|})^{-1}$, takes the form
\begin{equation}
	{D_p}^{-2}{\tu_0\hspace{0.0cm}}^2+C_0+P(\tu_0)=0,
	\label{eq_u_Dp}
\end{equation}
with $P(\tu_0):=C_1\tu_0+\cdots +C_5{\tu_0\hspace{0.0cm}}^5$, where the $C_i$ are real constants. 
In the limit $D_p\to\infty$, the term ${D_p}^{-2}{\tu_0\hspace{0.0cm}}^2$ vanishes, and $\tu_0$ converges to the unique positive solution {\black $\tu_0^*$} of $C_0+P(\tu_0)=0$, which is independent of $D_p${\black . 
	Since $u=D_p\tu$, it follows that $u_0 \approx D_p {\tu_0}^*$ for $D_p\gg 1$. }
Condition \eqref{e:xp_prime}
is readily verified numerically for any $D_p$; for large $D_p$ we can also see this rigorously by inspecting the expression for $\partial_u \tau(u_0)$ as a function of $D_p$, which reads
$$ \partial_u \tau(u_0)(D_p)=\tC_5 {D_p}^3\left( -\tC_4\left(\frac{u_0}{D_p}\right)^4 + \tC_3\left(\frac{u_0}{D_p}\right)^3 - \tC_2\left(\frac{u_0}{D_p}\right)^2 + \tC_1\frac{u_0}{D_p} - \tC_0 \right).$$
Here all $\tC_i$ are positive constants. Thus, for $u_0$ exactly proportional to $D_p$ we have $\partial_u \tau(u_0)(D_p)={\black -}\tC {D_p}^3$, with $\tC{\black >}0$, and the approximate proportionality implies $\partial_u \tau(u_0)<0$ for $D_p\gg 1$.

\subsection{Stability of Straight Motion}\label{s:linstab}

We first recall from \cite{InitialPaper} that the straight motion without control is linearly unstable. Indeed, {\black $F'(\bv)$ from \eqref{e:4DexplicitODE} at $\bv=(u_{0},0,0,0)^\intercal$ 
	reads
	\begin{align*}J&:=
		\begin{pmatrix}
			2X_{u|u|}{u_0}+\partial_u \tau(u_0)		&0				&0	&0\\
			0									&Y_\beta {u_0}	&(Y_\gamma-\mL)u_0+\tau(u_0)\eps_\w	&\tau(u_0)\eps_\psi\\
			0									&N_\beta u_0		&N_\gamma{u_0}+x_T\tau(u_0)\eps_\w	&x_T\tau(u_0)\eps_\psi\\
			0									&0	&1	&0\\
		\end{pmatrix}.
	\end{align*}
	At $\eps_\psi=0$ it has vanishing fourth column, and hence, a zero eigenvalue, which is the same for $M^{-1}J$. The remaining eigenvalues of $M^{-1}J$ are those of its upper left $3\times 3$-submatrix, which we denote by ${S}$. This} has block structure with upper left entry 
 $\lambda_1=(2X_{u|u|}{u_0}+\partial_u \tau(u_0))/(m+m_{uu})$, {\black which is always negative since $X_{u|u|}u_0<0$ and $\partial_u\tau(u_0)<0$, as discussed above.} 
Thus, the linear stability of the equilibrium point $(u_0,0,0)$ is determined by the lower right $2\times 2$-submatrix of ${S}$. For the HTC, {\black and at $\eps_\w=0$,} we readily find that its eigenvalues are non-zero with opposite signs; cf.\ \cite{InitialPaper}. %
{\black Due to the second-order modulus terms,} the nonlinear terms in $F$ are differentiable at $v=r=0$, but not in a neighborhood of this. However, \eqref{e:4DexplicitODE} can be cast as a semi-linear {\black ordinary differential equation since $F(\bv)=J\bv+h(\bv)$, where $h(\bv)=\calO(\|\bv\|^2)$. Therefore,} the linear stability principle {\black applies}; see also \cite{SteinMacher2020}. Hence, the straight motion is unstable for the HTC {\black without control} and the free parameters $D_p, x_T$ do not change this instability: $x_T$ does not influence the stability analysis since the matrix and equilibrium do not depend on it, and the propeller diameter $D_p$ enters only in $\partial_u \tau(u_0)$, which modifies the eigenvalue $\lambda_1$ of ${S}$, but -- as noted above -- it is negative for any value of $D_p$.

In the remainder of this section, we analyze the eigenvalues of {\black $M^{-1}J$ when including} the P-control \eqref{Pcontrol}. 
With $p_{11} = \lambda_1$, it has the form
\begin{equation}A:=M^{-1}J= 
	\begin{pmatrix}
		p_{11}	&0		&0		&0		\\
		0		&p_{22}	&p_{23}	&p_{24}	\\
		0		&p_{32}	&p_{33}	&p_{34}	\\
		0		&0		&1		&0
	\end{pmatrix},
	\label{Matrix_A}
\end{equation}
and we {\black write} the other matrix entries as follows, noting the dependencies on $u_0$, $\eps_\psi$, $\eps_\w$:
\begin{center}
	\begin{tabular}{ c c c } 
		$p_{22}=p_{22u}u_0$, 	& $p_{23}=p_{23u}u_0+q_{23}\tau(u_0)\eps_\w$, & $p_{24}=q_{23}\tau(u_0)\eps_\psi$,\\ 
		$p_{32}=p_{32u}u_0$, 	& $p_{33}=p_{33u}u_0+q_{33}\tau(u_0)\eps_\w$, & $p_{34}=q_{33}\tau(u_0)\eps_\psi$,
	\end{tabular}
\end{center}
\medskip
with $D:=(m+m_{vv})(I_z+m_{rr})-m_{rv}m_{vr}$, and
\begin{align*}
	p_{22u} &= D^{-1}\big((I_z+m_{rr})Y_\beta -m_{vr}N_\beta\big), & 
	p_{32u} &= D^{-1}\big(-m_{rv}Y_\beta+(m+m_{vv})N_\beta\big),\\
	p_{23u} &=D^{-1}\big( (I_z+m_{rr})(Y_\gamma-\mL)-m_{vr}N_\gamma\big), & 
	q_{23} &= D^{-1}\big(I_z+m_{rr}-m_{vr}x_T\big),\\
	p_{33u} &= D^{-1}\big(-m_{rv}(Y_\gamma-\mL)+(m+m_{vv})N_\gamma\big), & 
	q_{33} &= D^{-1}\big(-m_{rv}+(m+m_{vv})x_T\big).
\end{align*}
Further we define the following {\black coefficients}, which enter in the stability result:
\begin{align}
	\begin{split}
		K_{11} &= {q_{33}}^2\tau(u_0)^2, \\
		K_{02} &= q_{33}(p_{32u}q_{23}-p_{22u}q_{33})u_0\tau(u_0)^2, \\
		K_{01} &= \left[(p_{22u}+p_{33u})(p_{32u}q_{23}-p_{22u}q_{33})+q_{33}(p_{23u}p_{32u}-p_{22u}p_{33u})\right]{u_0}^2\tau(u_0), \\ 
		K_{10} &= \left[(p_{22u}+p_{33u})q_{33}+p_{32u}q_{23}-p_{22u}q_{33}\right] u_0\tau(u_0), \\
		K_{00} &= (p_{22u}+p_{33u})(p_{23u}p_{32u}-p_{22u}p_{33u}){u_0}^3.
	\end{split}
	\label{Ks}
\end{align}

With these preparations we can formulate our main result concerning the change of stability of the unstable straight motion equilibrium for the HTC values. This is a refinement of the result in \cite{MathiasT}, and implies `global controllability' of the straight motion{\black . This means} that stabilization by the P-control is possible along any direction in the control parameter space, i.e., the positive quadrant of the $(\eps_\w, \eps_\psi)$-plane. {\black In \S\ref{s:npDp} we will find that this remains valid for any $D_p>0$, and in \S\ref{s:thrusterpos} we will study the non-trivial impact of $x_T$.} 

{\black By stabilization of the straight motion we mean exponential asymptotic stability, up to symmetry in $\psi$ for $\eps_\psi=0$, of the equilibrium $(u,v,r,\psi)=(u_0,0,0,0)$ in \eqref{4DSystem}. 
	Due to the linear stability principle this is equivalent to strictly negative real parts of the eigenvalues of $A$, or its 3D reduction $S$ for the case $\eps_\psi=0$.}   

\begin{theorem}
	\label{Thm_eps_curve}
	Consider the thruster model \eqref{4DSystem} with the HTC values and define
	\begin{equation}
		\eps_\psi(\eps_\w) := -\frac{K_{02}{\eps_\w}^2+K_{01}\eps_\w+K_{00}}{K_{11}\eps_\w+K_{10}},
		\label{function_eps}
	\end{equation}
	with $K_{ij}$ from \eqref{Ks} and $\eps_r, \eps_\psi$ the P-control parameters from \eqref{Pcontrol}. 
	Fix any $\epsrd, \epspsid\geq 0$ and for $s\geq 0$ consider control parameters on the ray 
	$s\cdot (\epsrd,\epspsid)$. 
	Then, as $s$ increases, the equilibrium $(u_0,0,0,0)$ of \eqref{4DSystem} is stabilized when $s\cdot (\epsrd,\epspsid)$ crosses the curve defined by \eqref{function_eps}.
	This crossing point lies at a unique $s^*>0$ {\black for each fixed $\epsrd,\epspsid$. T}he eigenvalues of {\black $A$} 
	{\black behave as follows:}  
	for {$\epspsid>0$}, 
	a complex pair of eigenvalues traverses the imaginary axis as $s$ crosses $s^*$, while for 
	{$\epspsid=0$} 
	one eigenvalue is fixed at zero and a simple real eigenvalue traverses zero at $s=s^*$. 
\end{theorem}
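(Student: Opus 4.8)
The plan is to reduce the linearization $A=M^{-1}J$ from \eqref{Matrix_A} to a three–dimensional problem, apply the Routh–Hurwitz criterion, and recognise the curve \eqref{function_eps} as the associated Hurwitz boundary. First I would use the block structure of $A$: the surge coordinate decouples, so $p_{11}$ is an eigenvalue of $A$, and by the discussion preceding \eqref{Matrix_A} it is negative for all admissible parameters (for every $D_p$ in particular). Hence the stability of $(u_0,0,0,0)$ is governed entirely by the lower–right $3\times3$ block $B$ acting on $(v,r,\psi)$,
\[ B=\begin{pmatrix} p_{22}&p_{23}&p_{24}\\ p_{32}&p_{33}&p_{34}\\ 0&1&0\end{pmatrix}. \]
A cofactor expansion along the last row gives $Q_P(\lambda)=\lambda^3+c_2\lambda^2+c_1\lambda+c_0$ with
\[ c_2=-(p_{22}+p_{33}),\qquad c_1=p_{22}p_{33}-p_{23}p_{32}-p_{34},\qquad c_0=p_{22}p_{34}-p_{24}p_{32}, \]
and substituting the entries from \eqref{Matrix_A} (writing $R:=p_{32u}q_{23}-p_{22u}q_{33}$) yields
\[ c_2=-\tr(S)-q_{33}\tau(u_0)\eps_\w,\quad c_1=\det(S)-R\,\tau(u_0)u_0\eps_\w-q_{33}\tau(u_0)\eps_\psi,\quad c_0=-R\,\tau(u_0)u_0\eps_\psi. \]

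Second, the algebraic heart of the matter is the identity
\[ c_1c_2-c_0=K_{11}\eps_\w\eps_\psi+K_{10}\eps_\psi+K_{02}\eps_\w^2+K_{01}\eps_\w+K_{00}, \]
with the $K_{ij}$ of \eqref{Ks}, which I would verify by direct expansion; it shows that \eqref{function_eps} is exactly the zero set of the Hurwitz determinant $c_1c_2-c_0$, solved for $\eps_\psi$ (I work with the polynomial form rather than the rational one to avoid the vertical asymptotes of Fig.~\ref{sketch_xT}). By Routh–Hurwitz, $B$ is Hurwitz iff $c_2>0$, $c_0>0$ and $c_1c_2-c_0>0$. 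Restricting to the ray $\eps_\w=s\epsrd$, $\eps_\psi=s\epspsid$ collapses the Hurwitz determinant to a quadratic $h(s):=\big(c_1c_2-c_0\big)(s)=As^2+Bs+C$ with $A=\epsrd(K_{11}\epspsid+K_{02}\epsrd)$, $B=K_{10}\epspsid+K_{01}\epsrd$ and $C=K_{00}=-\tr(S)\det(S)$. Since the uncontrolled block satisfies $\tr(S)<0$ and $\det(S)<0$ for the HTC, we get $h(0)=K_{00}<0$, consistent with the known instability at $s=0$.

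Third, I would show that $h$ has a unique positive root $s^*$ at which it changes sign from negative to positive. Verifying from the HTC values that $K_{02}>0$ (equivalently $q_{33}R>0$) and $K_{10}>0$ makes $A\ge0$, with $B>0$ whenever $A=0$; together with $h(0)<0$ this forces precisely one positive root, with $h'(s^*)>0$. Simultaneously I would confirm that the other two Hurwitz inequalities do not fail before $s^*$: once $R<0$ is checked for the HTC, $c_0=-R\,\tau(u_0)u_0\eps_\psi>0$ for all $s>0$, and the sign of $q_{33}$ keeps $c_2>0$ on $[0,s^*]$. At $s^*$ one then has $c_1c_2=c_0>0$ with $c_2>0$, hence $c_1>0$, so $Q_P$ has a purely imaginary pair $\pm\rmi\sqrt{c_1}$ together with the real root $-c_2<0$: for $\eps_\psi^*>0$ a complex pair crosses the imaginary axis. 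For the degenerate ray $\epspsid=0$ one has $c_0\equiv0$, so $\lambda=0$ stays fixed (the line of equilibria in the uncontrolled $\psi$–direction) and $h(s)=c_1c_2$ vanishes through $c_1=0$ with $c_2>0$, i.e.\ a simple real eigenvalue crosses zero.

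Finally, for transversality I would factor $Q_P=(\lambda-\mu)(\lambda^2+a\lambda+b)$, the complex pair being the roots of the quadratic factor with real part $-a/2$; then $c_2=a-\mu$, $c_1=b-\mu a$, $c_0=-\mu b$ and $c_1c_2-c_0=a\big(b+\mu(\mu-a)\big)$, so differentiating at the crossing (where $a=0$, $\mu=-c_2<0$, $b=c_1>0$) gives $\frac{\D}{\D s}\mathrm{Re}\,\lambda\big|_{s^*}=-\tfrac12 a'(s^*)=-\dfrac{h'(s^*)}{2(c_1+c_2^2)}\neq0$ because $h'(s^*)>0$ and $c_1+c_2^2>0$. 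In the real case the small root obeys $\lambda\approx-c_1/c_2$, whence $\frac{\D\lambda}{\D s}\big|_{s^*}=-c_1'(s^*)/c_2(s^*)=R\,\tau(u_0)u_0\epsrd/c_2(s^*)\neq0$. The main obstacle I anticipate is not the algebra but the sign bookkeeping that the HTC data must supply: the whole argument hinges on confirming from \ref{HTC_param} that $\tr(S)<0$, $\det(S)<0$, $R<0$, $K_{02}>0$ and $K_{10}>0$, and that $c_2>0$ persists up to $s^*$ — it is exactly these signs, rather than the structural identities, that single out the unbounded–stabilisation scenario of the theorem as opposed to the bounded or empty stability regions of Fig.~\ref{sketch_xT}.
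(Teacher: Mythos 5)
Your proposal is correct and follows the same route as the paper's proof: reduction to the lower-right $3\times3$ block via $p_{11}<0$, the Routh--Hurwitz criterion with the same coefficients $c_0,c_1,c_2$, identification of \eqref{function_eps} with the zero set of $c_1c_2-c_0$ through the $K_{ij}$ of \eqref{Ks}, the factorization of $Q_P$ on the boundary yielding the purely imaginary pair (your $\pm\rmi\sqrt{c_1}$ equals the paper's $\pm\sqrt{-c_0/c_2}$ since $c_1=c_0/c_2$ there), and the degenerate case $\eps_\psi=0$ with the fixed zero eigenvalue and $c_1$ vanishing at $\eps_{\w_1}$. Where you genuinely add value: the paper's proof asserts but never carries out the uniqueness of $s^*$ along rays and the transversality claim in the theorem statement, whereas you restrict the Hurwitz determinant to the ray to get the quadratic $h(s)$ with $h(0)=K_{00}=-\tr(S)\det(S)<0$ and leading coefficient $A\geq 0$, forcing a unique sign change with $h'(s^*)>0$; and your smooth factorization $Q_P=(\lambda-\mu)(\lambda^2+a\lambda+b)$, valid locally since the roots at $s^*$ are simple, converts this into $\frac{\D}{\D s}\mathrm{Re}\,\lambda\big|_{s^*}=-h'(s^*)/\big(2(c_1+c_2^2)\big)\neq 0$, with the analogous explicit computation $-c_1'(s^*)/c_2$ in the real case --- steps the published proof omits entirely (it relies implicitly on convexity of the boundary curve). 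Your sign bookkeeping is also consistent with the paper: $R<0$ and $q_{33}<0$ hold for the HTC (they reappear as $\ctA+\ctB x_T>0$ and $\cA+\cB x_T<0$ in the $x_T$-analysis), giving $c_0>0$, $K_{02}=q_{33}R\,u_0\tau(u_0)^2>0$, and in fact $c_2>0$ for all $s\geq 0$, not just on $[0,s^*]$. Only trivial caveats remain: exclude the degenerate ray $\epsrd=\epspsid=0$, and note that when $\epsrd=0$ the positivity $K_{10}>0$ is exactly what makes the then-linear $h$ increasing, as you anticipated.
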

The theorem states that the curve defined by \eqref{function_eps} is a stability boundary for the control parameters, which is sometimes referred to as gain margin, e.g., \cite{PAPOULIAS1994}. 
We plot this stability boundary in Fig.~\ref{eps_curve_m}, and illustrate the organization of the eigenvalues of $A$ in the positive quadrant of the $(\eps_r,\eps_\psi)$-plane.
This eigenvalue configuration at $\eps_\psi=0$ and $\eps_{\w}$ on the stability boundary is reminiscent of a Bogdanov--Takens point. However, unlike a generic  unfolding, 
for $\eps_\psi=0$ the right-hand side of \eqref{e:4DexplicitODE} is independent of $\psi$ so that $(u,v,r,\psi)=(u_0,0,0,\psi)$ with arbitrary $\psi$ forms a line of equilibria. 
We discuss aspects of the resulting bifurcations in \S\ref{s:pitch} and \S\ref{Numerical_Bif_An}. The Hopf bifurcation analysis on the stability boundary for $\eps_\psi>0$ will be presented in \S\ref{s:hopf}. For the nonsmooth system, this analysis is more delicate than usual.

\begin{proof}
	We analyze the eigenvalues of the linear part of \eqref{4DSystem} given by $A$ in \eqref{Matrix_A}. As noted above, $p_{11}<0$ for the HTC values, so that it suffices to consider the lower right $3\times 3$-matrix, which we denote by $P$. 
	Its characteristic polynomial reads 
	\begin{equation}\label{e:charpoly}
		Q_P(\lambda)=\det(\lambda I-P)=\lambda^3+c_2\lambda^2+c_1\lambda+c_0,
	\end{equation}
	with $I$ the 3-by-3 identity matrix and 
	\begin{align*}
		c_0 &= p_{34}p_{22}-p_{24}p_{32} = (p_{22u}q_{33} - p_{32u}q_{23})u_0\tau(u_0)\eps_\psi,\\
		c_1 &= p_{22}p_{33}-p_{23}p_{32}-p_{34} \\
		&= (p_{22u}p_{33u}-p_{23u}p_{32u}){u_0}^2 + (p_{22u}q_{33}-p_{32u}q_{23})u_0\tau(u_0)\eps_\w - q_{33}\tau(u_0)\eps_\psi,\\
		c_2 &= -p_{22}-p_{33} = -(p_{22u}+p_{33u})u_0 - q_{33}\tau(u_0)\eps_\w.
	\end{align*}
	The Routh-Hurwitz criterion, see \cite{Gantmacher}, states that all eigenvalues of $P$ have negative real part if and only if $c_2, c_0>0$ and $c_2c_1-c_0>0$. The first condition is always satisfied for the HTC values and $\eps_\psi>0$ since $p_{22},p_{33}<0$, which implies $c_2>0${\black. In addition,} $p_{24}>0$, $p_{32},p_{34},p_{22}<0$ yield $c_0>0$. Concerning the second condition, using $K_{ij}$ from \eqref{Ks}, {\black a direct computation gives}
	\begin{align*}
		c_2c_1-c_0 &= 
		K_{11}\eps_\psi\eps_\w + K_{02}{\eps_\w}^2 + K_{01}\eps_\w + K_{10}\eps_\psi + K_{00}.
	\end{align*}
	{\black Since $\partial_{\eps_\psi} c_0>0$ for the HTC}, precisely those $(\eps_\w, \eps_\psi)$ `above' the convex curve defined by $c_2c_1-c_0=0$, or equivalently \eqref{function_eps}, provide eigenvalues with negative real part.
	In addition, for the control values satisfying \eqref{function_eps} {\black and} $\eps_\psi(\eps_\w)\neq 0$, it follows that there exist{\black s} a pair of complex conjugates with vanishing real part. Indeed, for $c_2c_1-c_0=0$ the characteristic polynomial can be factorized as $Q_P(\lambda)=(c_2\lambda^2+c_0)\left(\frac{1}{c_2}\lambda+1\right)$, and since $c_2, c_0>0$, the eigenvalues of the first factor correspond to a pair of purely complex conjugates, $\lambda_\pm=\pm\sqrt{-c_1}$.
	
	Finally, {\black at $\eps_\psi=0$ the last column of the matrix \eqref{Matrix_A} vanishes, which gives the fixed zero eigenvalue (and $c_0=0$)}. The conditions for the other eigenvalues to have {negative} real parts are $c_2,c_1>0$. 
	{\black Here $c_2>0$ holds as above since $p_{22},p_{33}<0$ for the HTC values and $\eps_{\w}\geq 0$.
		On the one hand, $c_1$ is linear in $\eps_\w$ with positive slope and $c_1<0$ at $\eps_{\w}=0$ for the HTC. Hence, $c_1$ changes sign from negative to positive at a unique $\eps_{\w_1}>0$, and at $\eps_{\w_1}$ the characteristic polynomial reads $Q_P(\lambda)=\lambda^2(\lambda+c_2)$, with a double root. On the other hand, since $c_0=0$ and $c_2>0$, the sign of $c_1$ is that of $c_2c_1-c_0$ so that $\eps_\psi(\eps_{\w})=0$ is the stability threshold as claimed.}
\end{proof}

\begin{figure}
	\centering
	\includegraphics[width= 0.4\linewidth]{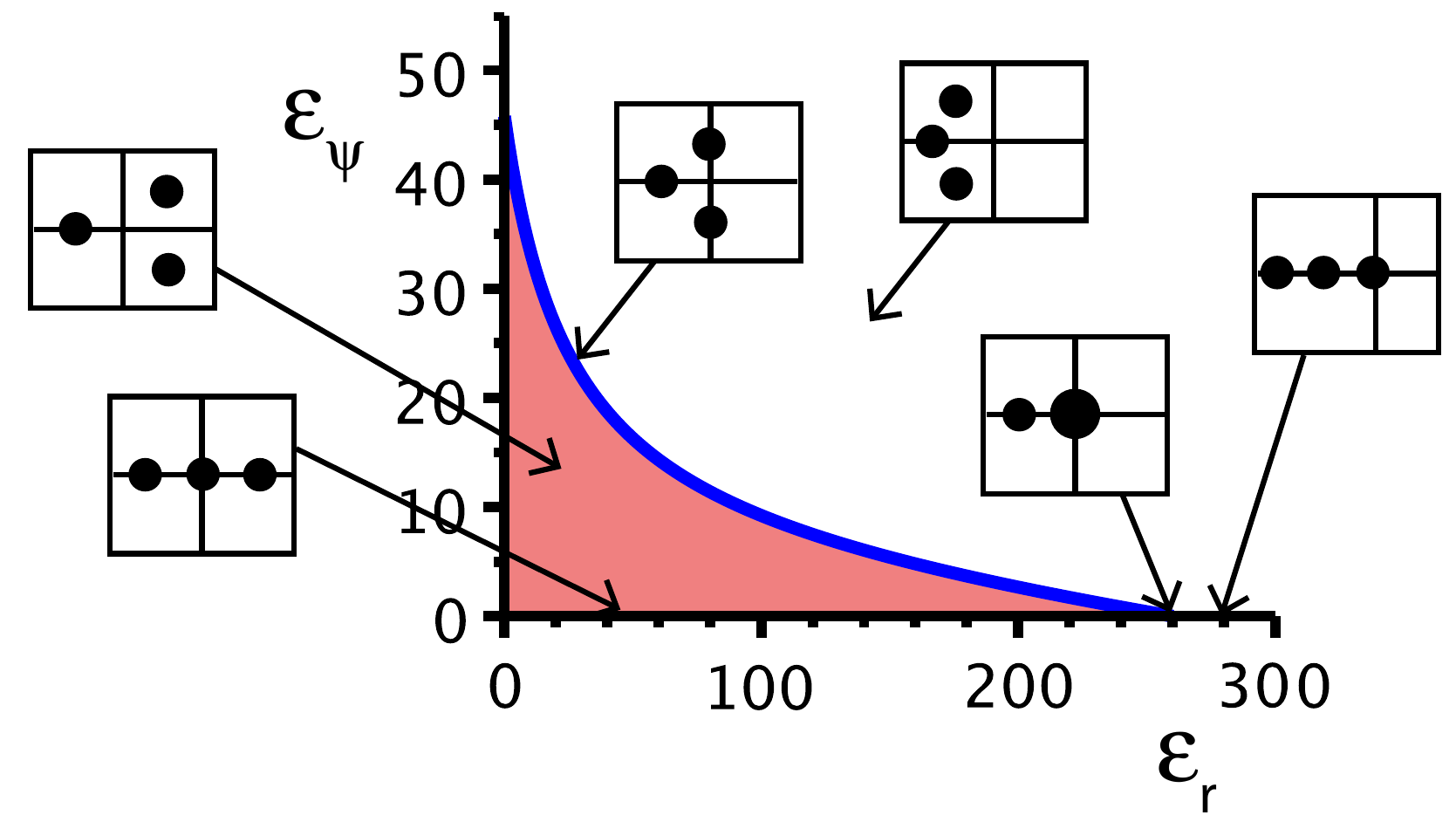}
	\caption{Stability boundary of the straight motion for the HTC values in the positive quadrant of the $(\eps_\w,\eps_\psi)$-plane. {\black The insets illustrate the arrangement of the eigenvalues 
			(in the complex-plane)		
			on the boundary, the $\eps_\w$-axis and nearby.} The straight motion is unstable in the colored area below the curve, and it is stable in the white region. The transition to stability for $\eps_\psi>0$ goes via a complex conjugate pair of eigenvalues, and the transition for $\eps_\psi=0$ by a real eigenvalue.}
	\label{eps_curve_m}
\end{figure}

Next we discuss the impact of varying the parameters $D_p$ and $x_T$ on the stabilization and thus linear controllability of the straight motion.

\subsubsection{Impact of changing the parameter $D_p$}\label{s:npDp}

We scale the propeller diameter $D_p$ as in \S\ref{Stability_s_m}: $u=D_p\tu$ and $\tau(u)={D_p}^4\tt(\tu)$, so $\tt$ is independent of $D_p$ 
{\black and \eqref{function_eps} becomes }
\begin{equation}
	\eps_\psi(\eps_\w,\tu;D_p) = -\frac{\tK_{02}{D_p}^6\tu\tt^2{\eps_\w}^2+\tK_{01}{D_p}^3\tu^2\tt\eps_\w+\tK_{00}{\tu}^3}{\tK_{11}{D_p}^5\tt^2\eps_\w+\tK_{10}{D_p}^2\tu\tt},
	\label{eps_psi_Dp}
\end{equation}
where $\tK_{ij}$ are real constants. 
{\black From \eqref{eq_u_Dp} we have $\tu_0=\tu_0(D_p) \to \tu_0^*\neq 0$, as $D_p\to \infty$.} 
Setting $E:=\tt {D_p}^3\eps_\w$, $\Upsilon:=\tt {D_p}^2$ {\black in \eqref{eps_psi_Dp}} gives 
an expression which has the same functional form with same signs of coefficients as \eqref{function_eps}. Thus, the stability boundary is qualitatively the same for different $D_p$. 
Specifically, on the one hand, for $\eps_\w=0$ we get from \eqref{eps_psi_Dp} that $\eps_\psi(0,\tu;D_p) = -\frac{\tK_{00}\tu^2}{\tK_{10}{D_p}^2\tt} = \calO({D_p}^{-2})$. On the other hand, for $\eps_\psi=0$ in \eqref{eps_psi_Dp} we obtain the function  
$\eps_\w(\tu;D_p)=\frac{\tu}{{D_p}^3\tt}\frac{-\tK_{01}\pm\sqrt{{\tK_{01}\hspace{0.0cm}}^2-4\tK_{02}\tK_{00}}}{2\tK_{02}}$,
and $\tu_0(D_p) \to \tu_0^*\neq 0$ implies $\eps_\w(\tu_0;D_p)= \calO({D_p}^{-3})$. 
{\black We conclude that increasing the propeller diameter $D_p$ enlarges the convex stable region in parameter space  with different rates along the different axes.} 
In fact, numerically {\black this} holds for all $D_p$.

\subsubsection{Impact of changing the thruster position parameter $x_T$}
\label{s:thrusterpos}

As shown next, understanding the impact of $x_T$ is more involved. {\black For instance, 
	taking} $x_T> 0.18$, the P-control cannot stabilize the straight motion at all, thus creating an `uncontrollable' situation. 

Before preparing the precise statement and proof, we first note that while \eqref{u_eq} does not depend on $x_T$, the function $\eps_\psi(\eps_\w)$ in \eqref{function_eps} does{\black, and we therefore denote this as $\eps_\psi(\eps_\w; x_T)$.} 
However, it is not clear that the graph of $\eps_\psi(\eps_\w;x_T)$ remains a stability boundary since this only accounts for one of the criteria for stable  eigenvalues. 
The Routh-Hurwitz criterion in the proof of Theorem \ref{Thm_eps_curve} {\black also} requires 
$c_2>0${\black,  but, for instance at $x_T=0.3$,} we have 
$c_2<0$ for $\eps_\w>\eps_{\w_2}\approx 144.47$; see Fig.~\ref{cases} (a). 
{\black I}n this case the curve $\eps_\psi(\eps_\w;x_T)$ is not a stability boundary, and the straight motion cannot be stabilized by the P-control \eqref{Pcontrol}.
\begin{figure}
	\centering
	\begin{tabular}{ccc}	
		\includegraphics[width=0.2\linewidth]{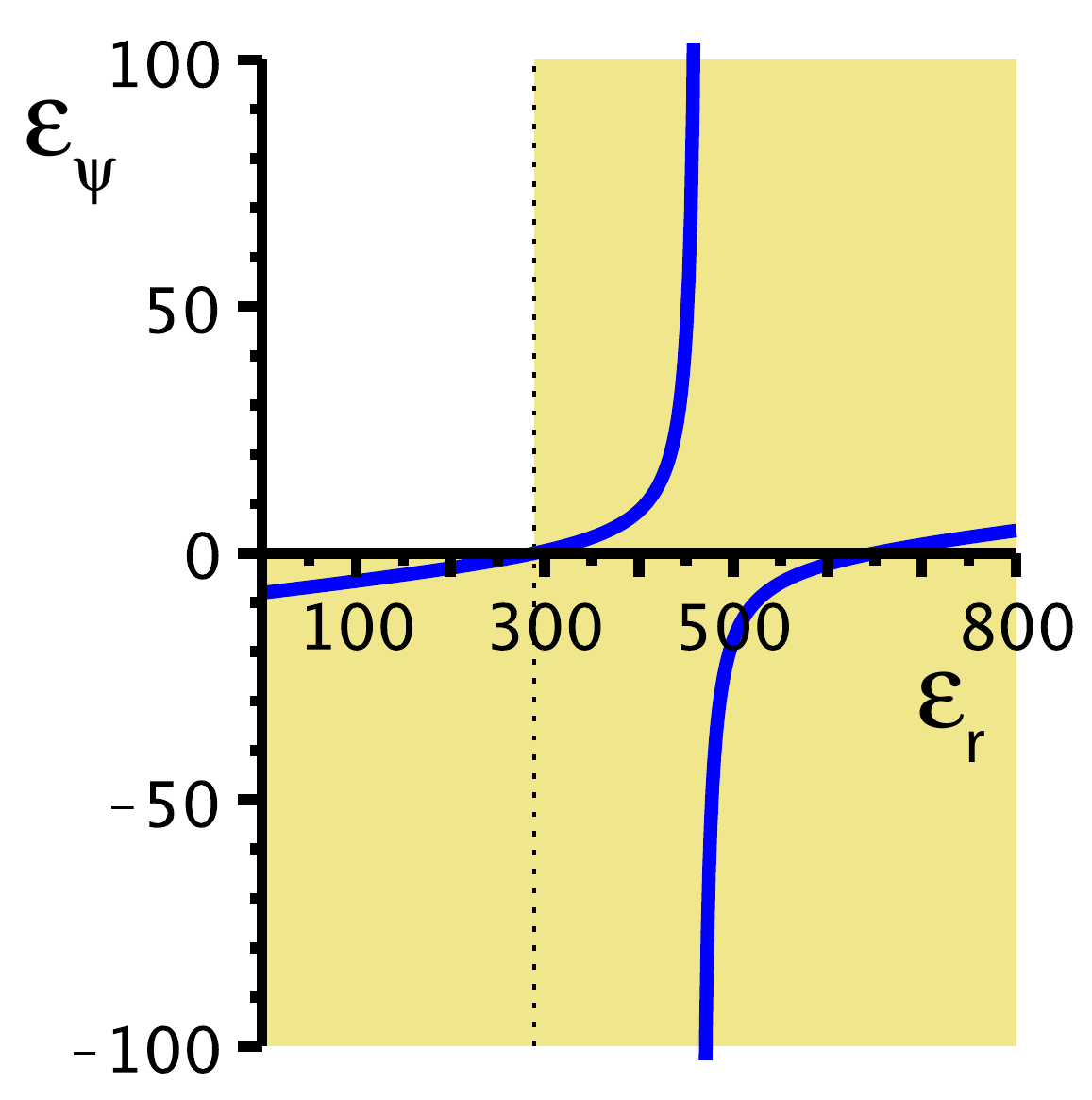} &
		\includegraphics[width=0.2\linewidth]{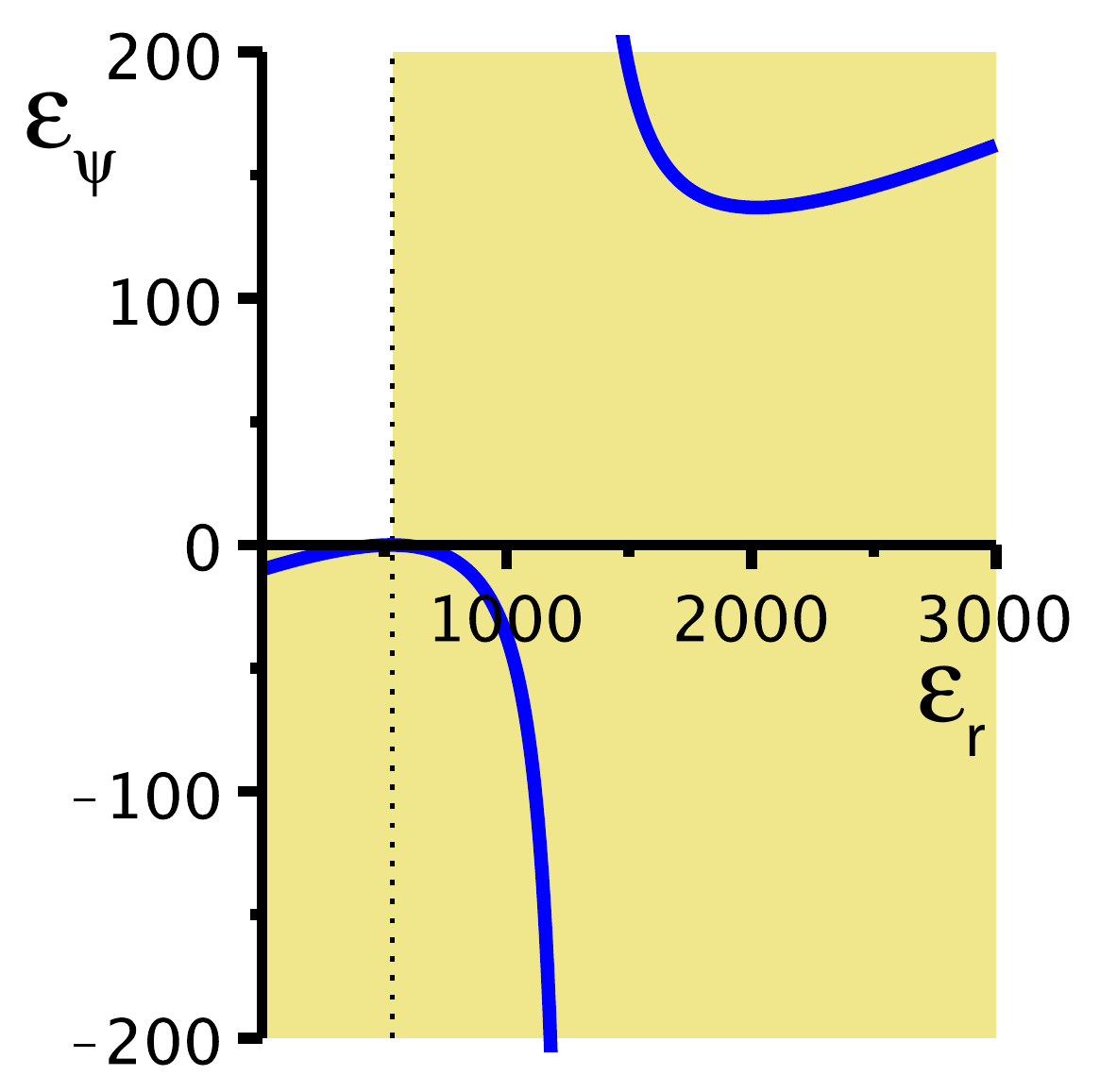} & 
		\includegraphics[width=0.2\linewidth]{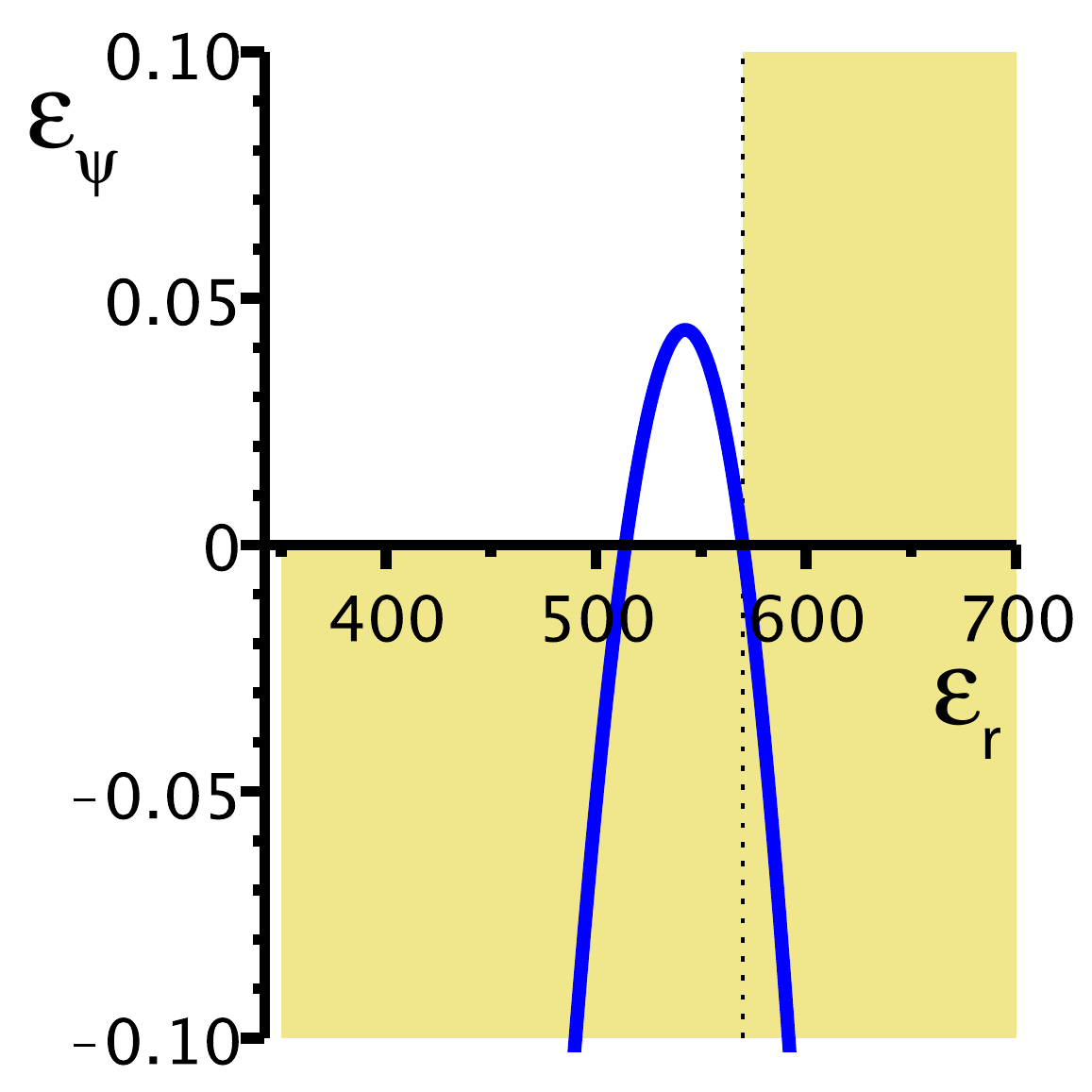} \\
		(a) & (b) & (c)
	\end{tabular}	
	\caption{We display different forms of the curve $\eps_\psi(\eps_\w{\black;x_T})$ (blue) together with violations of the stability criteria $c_0<0,c_2<0$ (colored regions), where $c_0<0$ for $\eps_\psi<0$, and $c_2<0$ for large $\eps_\w$. {\rm(a)} $x_T=0.3$, {\rm(b)} $x_T=\xTs\approx 0.17$, and {\rm(c)} $x_T=0.16$, where we omit the second 
		branch which is similar to that in {\rm(b)}.}
	\label{cases}
\end{figure}
In the following we provide a detailed (and somewhat tedious) analysis that explains all possibilities to stabilize the straight motion {\black for different $x_T$ under sign conditions that hold for the HTC}. 

The coefficients $c_0, c_1, c_2$ of the eigenvalue problem {\black \eqref{e:charpoly} can be written as}
\begin{equation}\label{e:ccoeffs}
	\begin{aligned}
		c_0 &= (\ctA-\ctB x_T)\eps_\psi, \\
		c_1 &= -\cZ + (\ctA-\ctB x_T)\eps_\w + (\cA-\cB x_T)\eps_\psi,\\
		c_2 &= \cC + (\cA-\cB x_T)\eps_\w, 
	\end{aligned}
\end{equation}
where
\begin{equation}
	\begin{aligned}
		&\begin{aligned}
			\cA &:= D^{-1}m_{rv}\tau(u_0) > 0,
			&\cB &:= D^{-1}(m+m_{vv})\tau(u_0) > 0,\\
			\ctA &:= {\black - D^{-1}N_\beta u_0\tau(u_0) }> 0,
			&\ctB &:= {\black D^{-1}Y_\beta u_0\tau(u_0) } > 0,\\
		\end{aligned}\\
		&\cC := D^{-1}\left[m_{vr}N_\beta-(I_z+m_{rr})Y_\beta  -(m+m_{vv})N_\gamma+m_{rv}(Y_\gamma-\mL)\right]u_0 > 0,\\
		&\cZ :={\black D^{-1}(m N_\beta + N_\gamma Y_\beta - N_\beta Y_\gamma){u_0}^2} > 0.
	\end{aligned}\label{e:signcond}
\end{equation}
{\black With these definitions, the function 
	\eqref{function_eps} reads}
{\black 
	\begin{equation}\label{e:epspsixT}
		\eps_\psi(\eps_r;x_T) = \frac{ \Big(\cC(\ctA-\ctB x_T)+(\cA-\cB x_T)(\cZ+(\ctA-\ctB x_T)\eps_\w)\Big)\eps_\w -\cC\cZ}
		{\ctA-\ctB x_T + (\cA-\cB x_T)(\cC+(\cA-\cB x_T)\eps_\w)}.
\end{equation}}
The denominator has a unique root with respect to $\eps_\w$ given by 
\begin{equation}
	\eps_\w^* =
	\frac{\ctA-\cC\cA-(\ctB-\cC\cB)x_T}{(\cA-\cB x_T)^2},
	\label{asy_xT}
\end{equation}
{\black i.e., $\eps_\psi(\cdot;x_T)$ possesses a singularity at $\eps_r=\eps_r^*$;} see Fig.~\ref{eps_curve_m_xr_n03} (a). For the HTC, large negative values of $x_T$ {\black yield} negative $\eps_\w^*<0$, a sign change occurs at $x_T\approx -0.38$, and {\black there is a} singularity near $x_T=0.016${\black ; see Fig.~\ref{roots_epsR}}. In particular, $\eps_\w^*<0$ for the HTC default value $x_T\approx -0.49$.
\begin{figure}
	\centering
	\includegraphics[width=0.4\linewidth]{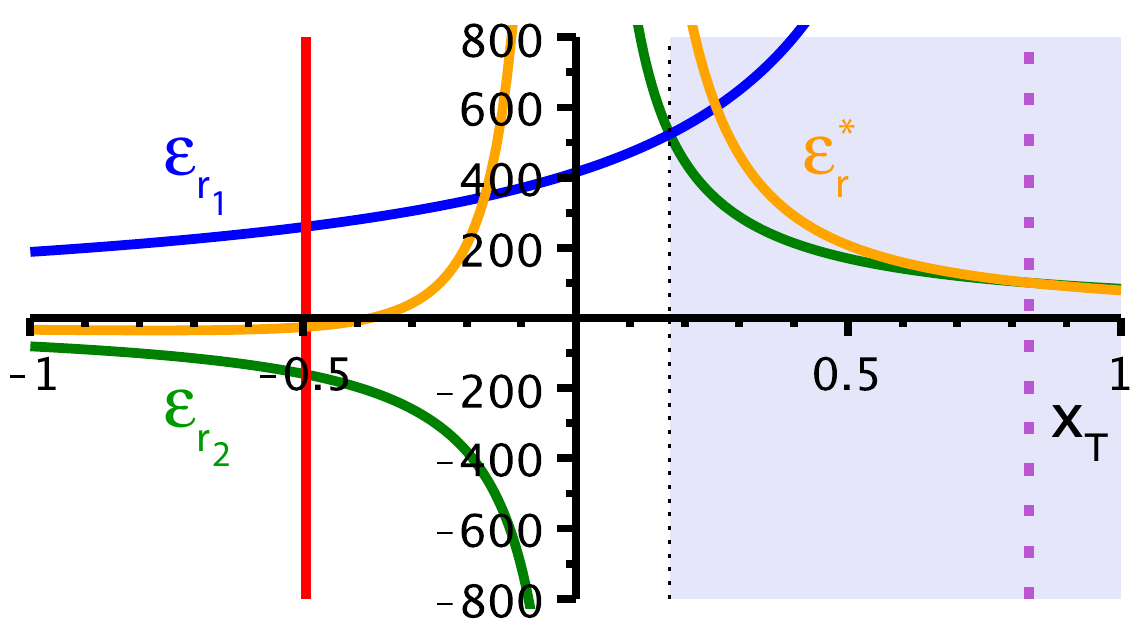}
	\caption{We plot relevant quantities {\black for the HTC} as functions of $x_T$. {\black We mark the default HTC value $x_T=-0.49429$ (red vertical line), the region $x_T>\xTs\approx 0.17$ (shaded area) and $x_T=x_{T_{1}}\approx 0.83$ (purple pointed line).} Orange curves: $\eps_\w^*(x_T)$ from \eqref{asy_xT} for the singularity of $\eps_\psi(\eps_\w;x_T)$; blue curve: $\eps_{\w_1}(x_T)$, the root of $c_1$ at $\eps_\psi=0$; green curves: $\eps_{\w_2}(x_T)$, the root of $c_2$. {\black The domain $[-0.5,0.5]$ of $x_T$ is extended to illustrate better the shape of the curves and the location of $x_{T_{1}}$.}}
	\label{roots_epsR}
\end{figure}
As long as $\eps_\w^*<0$, the singularity lies outside the positive range of the control parameters and is thus disregarded. However, for values of $x_T$ where $\eps_\w^*>0$, stabilization by $\eps_\psi$ alone is not possible. 
We plot an example in Fig.~\ref{eps_curve_m_xr_n03} (a). 
For $\eps_\w^*>0$, there {\black can} be two positive values of $\eps_\w$ for which $\eps_\psi(\eps_\w;x_T)=0$, i.e., potentially a stabilization and subsequent destabilization when increasing $\eps_\w$ from zero. We plot examples in Fig.~\ref{cases} (a) {\black and (c)}. 
In addition, comparing Figs.~\ref{eps_curve_m_xr_n03} (a) and \ref{cases} (a), a switching from convexity to concavity of $\eps_\psi(\eps_\w;x_T)$ has occurred. {\black For the HTC this lies} at $x_T\approx-0.17$; 
cf.\ Fig.~\ref{eps_curve_m_xr_n03} (b,c). 
{\black At this switching point \eqref{e:epspsixT} degenerates. This will be treated as a special case further below.} 

\begin{figure}
	\centering
	\begin{tabular}{cccc}
		\includegraphics[width=0.25\linewidth]{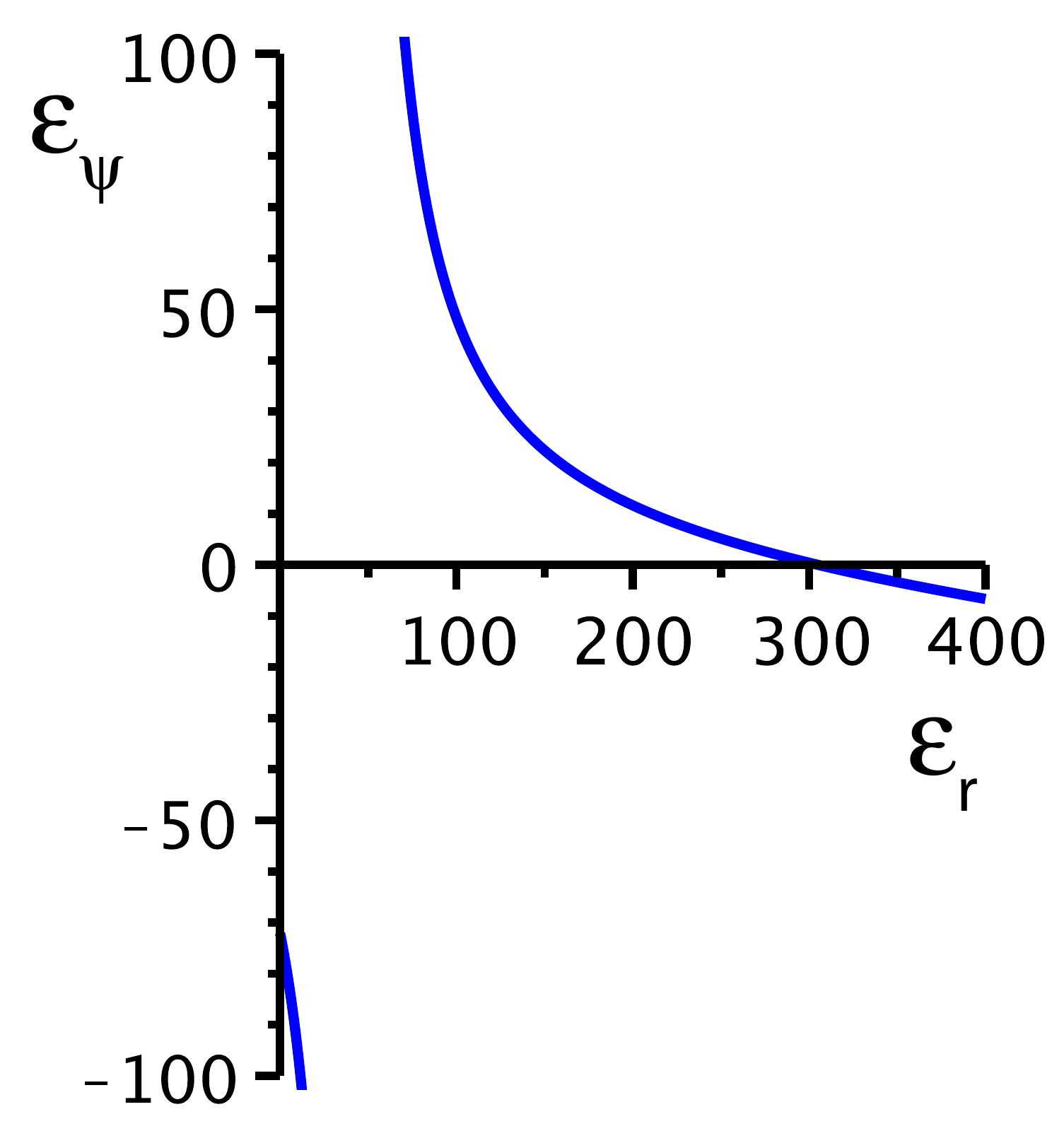} & 
		\includegraphics[width=0.23\linewidth]{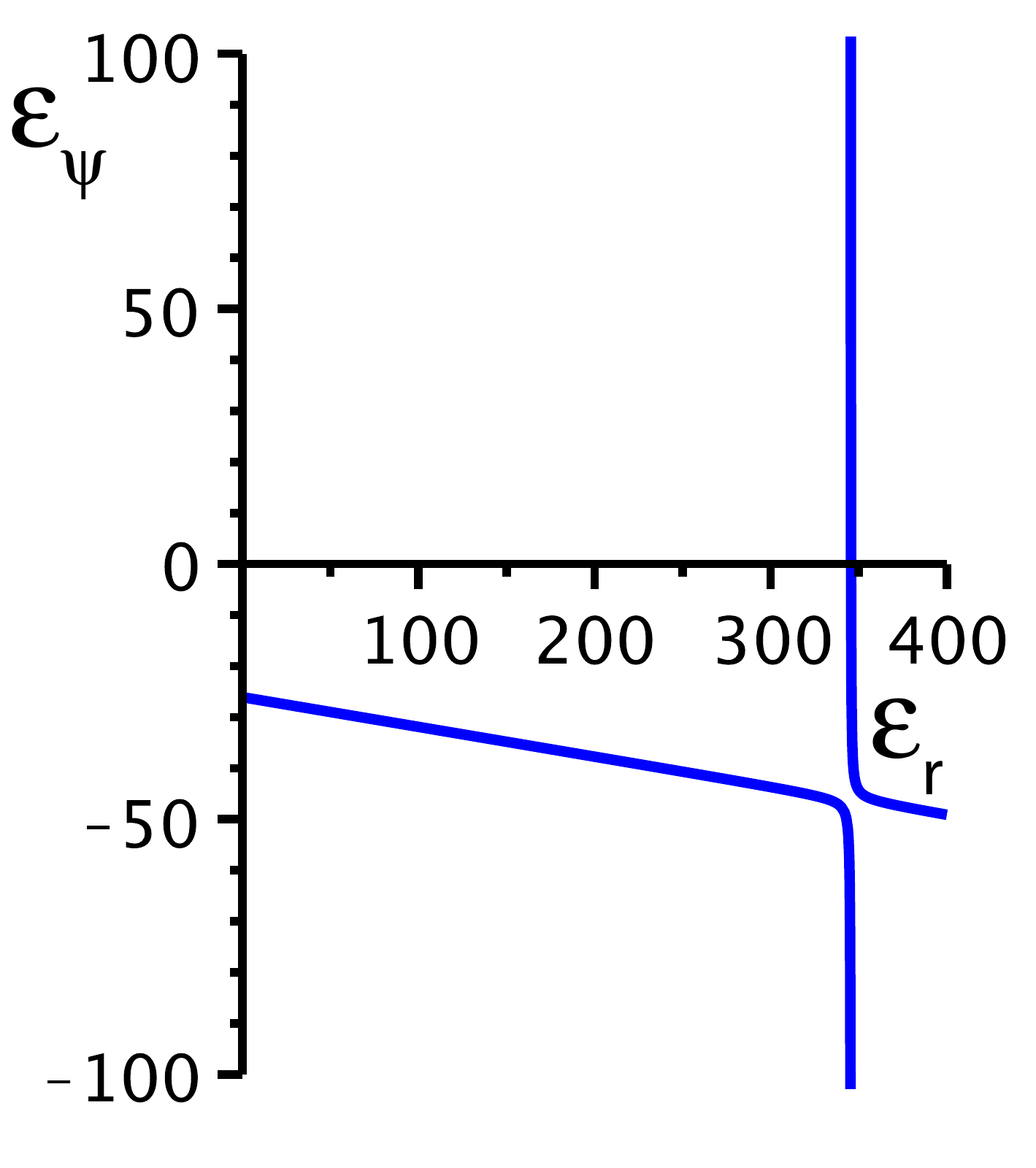} &
		\includegraphics[width=0.25\linewidth]{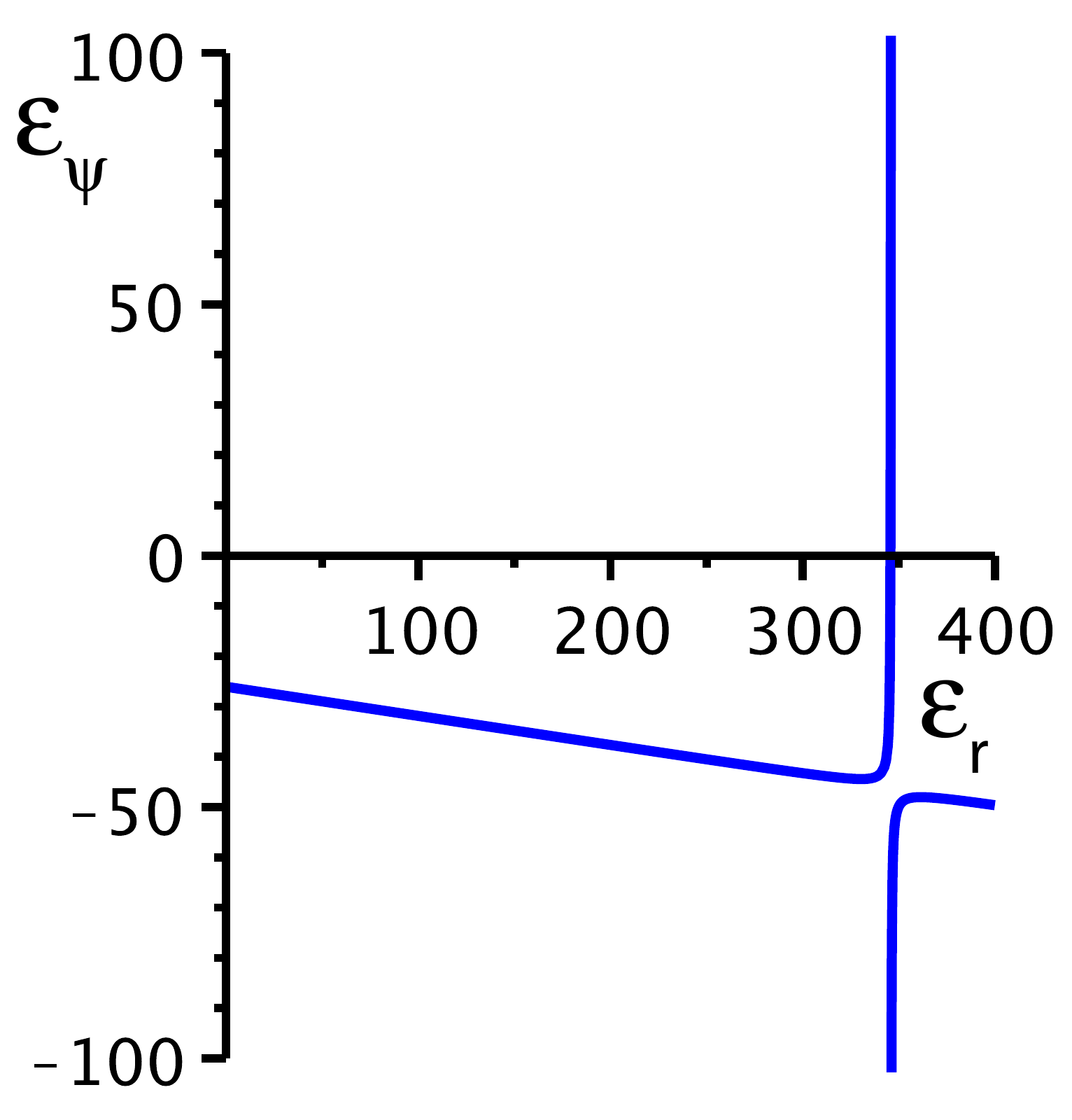}\\
		(a) & (b) & (c)
	\end{tabular}
	\caption{The function $\eps_\psi(\eps_\w;x_T)$ is plotted {\black for the HTC and} {\rm(a)} $x_T=-0.3$, where $\eps_\w^*\approx {\black 41.88}$;  
		{\rm(b)} $x_T=-0.1669$; and {\rm(c)} $x_T=-0.1668$.}
	\label{eps_curve_m_xr_n03}
\end{figure}

{\black Otherwise, whenever stabilization of the straight motion is possible, the stability region is bounded by the graph of $\eps_\psi(\eps_\w;x_T)$ from \eqref{e:epspsixT}, which gives the solutions to $c_2c_1=c_0$. 
	Its intersections with the $\eps_\w$-axis, if defined, are
	\[
	\eps_{\w_1}:=\frac{\cZ}{\ctA - \ctB x_T},\quad \eps_{\w_2}:=-\frac{\cC}{\cA-\cB x_T}.
	\]
	At these values and $\eps_\psi=0$ we have $c_1=0$ and $c_2=0$, respectively. For the HTC, $\eps_{\w_1}$ already appeared in the proof of Theorem~\ref{Thm_eps_curve}. If both are defined, so is $\eps_\w^*$, and we can write 
	\begin{equation}\label{e:epspsiroots}
		\eps_\psi(\eps_\w;x_T) = a\frac{(\eps_\w-\eps_{\w_1})(\eps_\w-\eps_{\w_2})}{\eps_\w-\eps_\w^*}, \qquad 
		a:=-\frac{\ctA-\ctB x_T}{\cA-\cB x_T}.
	\end{equation}
	For a given ordering of $\eps_{\w_1}, \eps_{\w_2}, \eps_{\w}^*$ and sign of $a$, we can read off the shape of the stability boundary as a function of $\eps_\w$. 
	The ordering depends on $x_T$, and relevant thresholds will be
	\begin{equation}
		x_{T_1}:=\frac{\ctA}{\ctB}=\frac{N_\beta}{Y_\beta}, \quad x_{T_2}:=\frac{\cA}{\cB} = \frac{m_{rv}}{m+m_{vv}}, \quad \xTs:=\frac{\cZ\cA+\cC\ctA}{\cZ\cB+\cC\ctB}, \\
		\label{e:xTthresholds}
	\end{equation}
	corresponding to the value of $x_T$ where $\eps_{\w_1}$ is undefined (and $c_0=0$), $\eps_{\w_2}$ is undefined, or $\eps_{\w_1}=\eps_{\w_2}$, respectively.
	Here $\xTs>0$ under the sign conditions \eqref{e:signcond}. 
	These special values of $x_T$ are independent of $u_0$ and thus of $D_p$. For the HTC the quantities in \eqref{e:xTthresholds} have the values noted in Fig.~\ref{sketch_xT2} and $x_{T_{1}}\approx 0.83$. In this figure we also plot the value{\black s $x_{T_0}$, $x_{T_-}$. The first solves $\eps_{\w}^*=0$ and has the explicit expression $x_{T_0}=(\cC\cA-\ctA)/(\cC\cB-\ctB)$. The threshold $x_{T_-}$} is the smallest solution to $\eps_{\w}^*=\eps_{\w_1}$ for the HTC. 
	This equation is generally quadratic with respect to $x_T$, but it is not clear that the roots are real under the conditions \eqref{e:signcond}. If these are real, then the smaller solution means a transition from convex to concave stability boundary for $\eps_\psi,\eps_\w> 0$.  
	
	Regarding signs and monotonicity we note that \eqref{e:signcond} implies, if $x_T$ is none of \eqref{e:xTthresholds}, 
	\begin{equation}\label{e:sgneps12}
		\begin{aligned}
			\sgn(\eps_{\w_1}) &=\sgn(x_{T_1}-x_T), && \sgn(a) = \sgn((x_{T_1}-x_T)(x_T-x_{T_2})), \\
			\sgn(\eps_{\w_2}) &=\sgn(x_T-x_{T_2}), && 
			\partial_{x_T}\eps_{\w_1}>0, \quad \partial_{x_T}\eps_{\w_2}<0,
		\end{aligned}
	\end{equation} 
	cf.\ Fig.~\ref{roots_epsR}.
}{\black In the case $\eps_{\w_1}=\eps_\w^*$ and $x_T\neq x_{T_2}$, the singularity of \eqref{e:epspsiroots} degenerates into a vertical graph that may form the stability boundary. Indeed, $c_1c_2=c_0$ can then be written as 
	\begin{equation}\label{e:degen}
		(\eps_\w-\eps_{\w_1})(\eps_\psi-a(\eps_\w-\eps_{\w_2}))=0, \quad \eps_{\w_1}=\eps_\w^*.
	\end{equation}
}
{\black With these preparations we present our main linear stability result.} As before, the stability region and its boundary refer to the positive quadrant in the $(\eps_\w,\eps_\psi$)-plane. 
\begin{theorem}
	\label{Thm_xT}
	Consider $x_T$ as a free parameter and assume for all other parameters the signs of coefficients within $c_0, c_1, c_2$ as {\black in \eqref{e:signcond} as well as $X_{u|u|}<0$, $\tau(0)>0$.} 
	Then two scenarios occur. In case {\black $x_{T_{2}}<x_{T_{1}}$}, 
	for any $x_T\leq {\black x_{T_{2}}}$ the statement of Theorem \ref{Thm_eps_curve} holds true for all 
	$\epsrd>0$. Furthermore, the stability boundary \eqref{function_eps} is a 
	{\black vertical line or a} 
	strictly monotone 
	{\black function of $\eps_\w$,} intersecting the $\eps_\w$-axis, but possibly not the $\eps_\psi$-axis with a vertical asymptote at ${\black \eps_\w=}\eps_\w^*$. 
	Moreover, if $x_T\geq\xTs$, the straight motion cannot be stabilized by any $\eps_\w, \eps_\psi>0$, while for $x_{T_{2}}<x_T<\xTs$, the stability region is bounded with boundary of parabolic shape, intersecting twice the $\eps_\w$-axis. 
	In case  {\black $x_{T_{1}}{\leq}x_{T_{2}}$}, the stability boundary is strictly monotone {\black or vertical} if $x_T<x_{T_{1}}$ and otherwise stabilization is impossible. \\
	{\black In all 
		cases which admit stabilization, the real part of the critical eigenvalue(s) has non-zero derivative as $(\eps_r,\eps_\psi)\geq 0$ transversally crosses the stability boundary along a curve.}
\end{theorem}

{\black In the final statement, transversal crossing means that the curve's tangent vector at the crossing point is linearly independent of the stability boundary's tangent vector.}

Before presenting the proof we proceed with some remarks. The theorem in particular implies the following alternative: either the straight motion can be stabilized by increasing $\eps_\w$ for all $\eps_\psi\geq 0$ or none, except in the case of a bounded stability region{\black; cf.\ Fig~\ref{sketch_xT2}}. In all cases, the possibility to stabilize is determined by the case $\eps_\psi=0$ alone. {\black The theorem implies that, given the signs in \eqref{e:signcond}, any sufficiently large $x_T$ makes it impossible to stabilize the straight motion by the P-control \eqref{Pcontrol}. But a necessary condition for this lack of controllability to be physically meaningful is $\xTs<1/2$ or $x_{T_{1}}<1/2$.} 
{\black Specifically, for the HTC values, we have $x_{T_{2}} < \xTs < x_{T_{1}}$ with $\xTs\approx 0.17$; cf.\ Fig.~\ref{sketch_xT2}.} 
{\black The decisive thresholds $x_{T_{1}}$ and $x_{T_{2}}$ have the surprisingly simple expressions $m_{rv}/(m+m_{vv})$ and $N_\beta/Y_\beta$, respectively, which depend purely on non-dimensional mass and hyodrodynamic coefficients of the hull.} 

\begin{figure}
	\centering
	\includegraphics[width=1.0\linewidth]{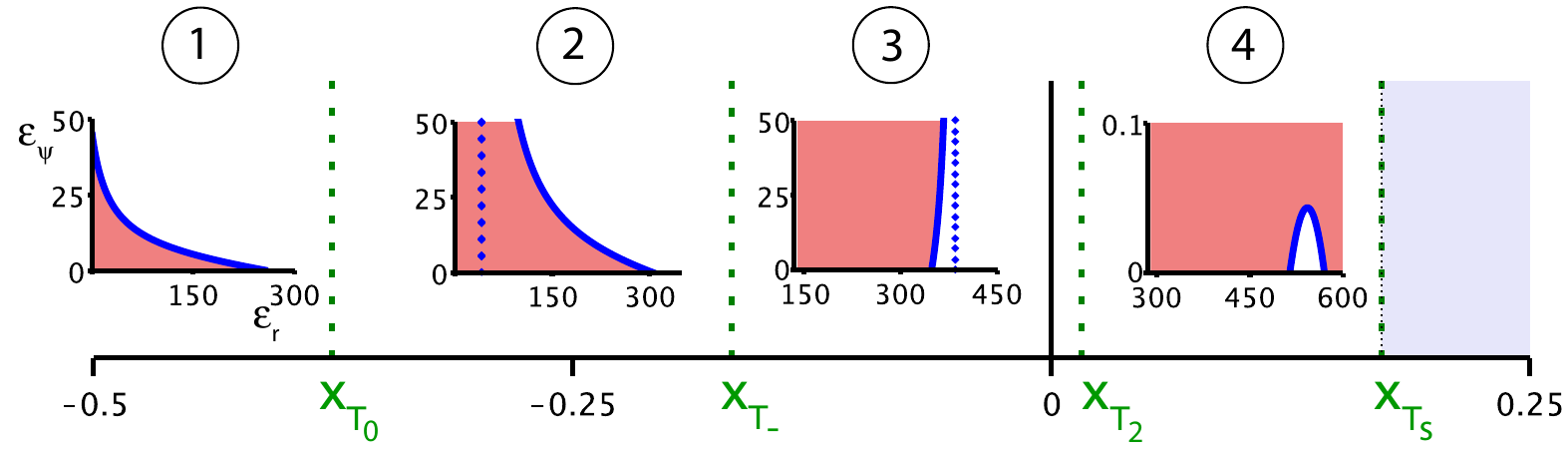}
	\caption{HTC parameters with varying $x_T$. Each inset shows the shape of the stability region in the $(\eps_\w,\eps_\psi)$-plane for $x_T$ in the corresponding interval on the $x_T$-axis. Unstable region is shaded red, blue line the stability boundary, vertical dashed line its vertical asymptote, if present. For $x_T>\xTs$ stabilization is not possible. Case 1: $x_T\in[-0.5,x_{T_0})$, $x_{T_0}\approx -0.38$, inset for $x_T\approx -0.49$. Case 2: $x_T\in(x_{T_0},x_{T_-})$, $x_{T_-}\approx -0.17$, inset for $x_T=-0.3$. Case 3: $x_T\in(x_{T_-},x_{T_2})$, $x_{T_{2}}\approx 0.016$, inset for $x_T\approx-0.16$. Case 4: $x_T\in(x_{T_2},\xTs)$, $x_{T_s}\approx 0.17$, inset for $x_T\approx 0.16$.}
	\label{sketch_xT2}
\end{figure}

\begin{proof}[Proof of Theorem~\ref{Thm_xT}]
	{\black If $X_{u|u|}<0$ and $\tau(0)>0$, then $u_0, \tau(u_0)>0$ by \eqref{u_eq}, which has a unique positive solution. The} Routh-Hurwitz criterion from the proof of Theorem \ref{Thm_eps_curve} consists of $c_0,c_2>0$ and $c_2c_1>c_0${\black; cf.\ \eqref{e:ccoeffs}. These conditions} must be attainable {\black with $\eps_\w\geq 0, \eps_\psi>0$ in order to} stabilize the straight motion, while for $\eps_\psi=0$ we have $c_0=0$ and the criterion becomes $c_1, c_2 >0$. The common condition $c_2>0$ is independent of $\eps_\psi$ and equivalent to either $x_T\leq x_{T_{2}}$ (and any $\eps_\w\geq 0$), or $\eps_\w< \eps_{\w_2}${\black; see Figure~\ref{cases}.}
	
	Concerning $\eps_\psi=0$, {\black since} $\ctA,\cZ,\ctB>0$, {\black we have} $c_1>0$ if and only if $x_T<x_{T_{1}}$ and $\eps_\w>\eps_{\w_1}$, {\black but only $\eps_{\w_1}\geq 0$ is a relevant constraint}. 
	In case $x_T>x_{T_{2}}$, {\black both $c_2>0$ and $c_1>0$ can be satisfied} if and only if $\eps_{\w_1}<\eps_{\w_2}$,  
	which is equivalent to $x_T< \xTs$. 
	Hence, controllability for $\eps_\psi=0$ requires $x_T<\min\left\{x_{T_{1}},\max\{x_{T_{2}},\xTs\}\right\}$,
	which covers the claimed statements in this case. 
	
	{\black We now turn to controllability for $\eps_\psi>0$. The condition $c_0>0$ is then equivalent to 
		$x_T<x_{T_{1}}$ as claimed for a possible stabilization. We may therefore assume $x_T<x_{T_{1}}$ in the following reasoning, which implies $\eps_{\w_1}>0$ by \eqref{e:sgneps12}.
		It remains to incorporate the last condition, $c_1c_2>c_0$, whose boundary is defined by $\eps_\psi(\eps_\w;x_T)$ from \eqref{e:epspsixT}. 
		{Since $\partial_{\eps_\psi}(c_1c_2-c_0)=0$ precisely at singularities of its graph, the stable region lies above or below this boundary }{\black unless it is vertical; cf. \eqref{e:degen}. We treat this case separately at the end and thus assume for now that $\eps_{\w_1}\neq \eps_\w^*$.}
		
		In case $x_T=x_{T_{2}}<x_{T_1}$, i.e., $\cA=\cB x_T$, we have 
		$\eps_\psi(\eps_\w;x_T) = \cC (\eps_\w -\cZ/(\ctA-\ctB x_T))$ with positive slope and root at $\eps_{\w_1}$ with respect to $\eps_\w$. This corresponds to the stability boundary for $\eps_\psi>0$ since $c_2, c_0>0$ due to the assumption $x_T=x_{T_2}<x_{T_1}$ and $\cC>0$. In this case $\partial_{\eps_\psi}(c_1c_2-c_0)=-(\cA-\cB x_T)<0$. Therefore, the stable region is below this boundary.
		
		The case {$x_T\neq x_{T_2}$} 
		is more involved and we use \eqref{e:epspsiroots}. Since $x_T<x_{T_1}$, both $\eps_{\w_1}$ and $\eps_{\w_2}$ exist, and 
		either $\eps_{\w_1}<\eps_\w^* < \eps_{\w_2}$ or $\eps_{\w}^*> \max\{\eps_{\w_1}, \eps_{\w_2}\}$ due to 
		\begin{equation}\label{e:rel2}
			\eps_\w^*=\eps_{\w_2}+(\ctA-\ctB x_T)/(\cA-\cB x_T)^2 > \eps_{\w_2}.
		\end{equation}
		A direct computation gives $\partial_{\eps_\w}\eps_\psi(\eps_{\w}; x_T)=0$ at $\eps_\w=\eps_\w^*\pm\sqrt{(\eps_\w^*-\eps_{\w_1})(\eps_\w^*-\eps_{\w_2})}$ so that the graph of $\eps_\psi(\cdot;x_T)$ has at most one critical point on either side of the singularity $\eps_\w=\eps_\w^*$. 
		Thus, in case $\eps_{\w_1}<\eps_\w^* < \eps_{\w_2}$ the branches of the graph are monotone on $(\eps_{\w_1},\eps_\w^*)$ and $(\eps_\w^*, \eps_{\w_2})$, respectively. In case $\eps_{\w}^*> \max\{\eps_{\w_1}, \eps_{\w_2}\}$ the branches are monotone on $(\max\{\eps_{\w_1}, \eps_{\w_2}\}, \eps_\w^*)$ and $(\eps_{\w}^*, \infty)$, respectively. 
		The type of monotonicity and the sign of $\eps_\psi(\cdot;x_T)$ can be inferred from the slopes at $\eps_{\w_1}, \eps_{\w_2}$, when given their ordering with respect to $\eps_\w^*$. 
		On the one hand, by a direct computation we have
		\begin{equation}
			\partial_{\eps_\w}\eps_\psi(\eps_{\w_1}; x_T) = a\frac{(\eps_{\w_1}-\eps_{\w_2})(\eps_{\w_1}-\eps_{\w}^*)}{(\eps_{\w_1}-\eps_{\w}^*)^2},
			\label{e:partialepspsi}
		\end{equation}
		which has the sign of $(x_T-x_{T_2})(\eps_{\w_1}-\eps_{\w_2})(\eps_{\w_1}-\eps_{\w}^*)$ 
		due to \eqref{e:sgneps12} and the assumption $x_T<x_{T_1}$. On the other hand, 
		we compute $\partial_{\eps_\w}\eps_\psi(\eps_{\w_2}; x_T)=(\cA-\cB x_T)(\eps_{\w_2}-\eps_{\w_1})$. 
		Together with the previous we conclude
		\begin{equation}\label{e:sgnpartials}
			\begin{aligned}
				\sgn\big(\partial_{\eps_\w}\eps_\psi(\eps_{\w_1}; x_T)\big) &= \sgn((x_T-x_{T_2})(\eps_{\w_1}-\eps_{\w_2})(\eps_{\w_1}-\eps_{\w}^*)),\\
				\sgn(\partial_{\eps_\w} \eps_\psi(\eps_{\w_2}; x_T)) &= \sgn((x_{T_2}-x_T)(\eps_{\w_2}-\eps_{\w_1})).
			\end{aligned}
		\end{equation}
		With these preparations, we discuss the claimed geometry of stability boundary for $\eps_\psi>0$ and the different cases.  
		
		\textbf{(I). The case $\mathbf{x_T<x_{T_1}}$ and $\mathbf{x_T\leq x_{T_2}}$}. This applies to both situations of the theorem statement and we have already inferred $c_2,c_0>0$. Suppose $x_T<x_{T_2}$. 
		From \eqref{e:sgneps12} it follows that $\eps_{\w_2}<0<\eps_{\w_1}$, which means that $\eps_{\w_1}$ lies at the unique intersection point of the graph of $\eps_\psi(\cdot;x_T)$ with the $\eps_\w$-axis. Hence, to prove the claim that Theorem \ref{Thm_eps_curve} holds true for all $\epsrd>0$, it suffices to show that the graph also intersects the positive $\eps_\psi$-axis or is positive with vertical asymptote. As argued above, this is ensured if $\partial_{\eps_\w}\eps_\psi(\eps_{\w_1}; x_T)<0$. Since $\eps_{\w_1}>\eps_{\w_2}$ in the present situation, this sign is indeed negative for $\eps_\w^*<\eps_{\w_1}$ due to \eqref{e:sgnpartials}. In the case $\eps_\w^*>\eps_{\w_1}$ the sign is positive, but the graph is positive for $\eps_{\w_1}<\eps_{\w}<\eps_{\w}^*$ with a vertical asymptote at $\eps_\w^*$; compare Fig.~\ref{eps_curve_m_xr_n03}. For $x_T=x_{T_2}$ we already found above that the graph is linear with positive slope and root at $\eps_\w=\eps_{\w_1}>0$.
		
		\textbf{(II). The case $\mathbf{x_{T_{2}}<x_{T_{1}}}$.} We first note that the subcase $x_T\geq \xTs$ in the theorem statement certainly occurs since $\xTs>x_{T_{2}}$ follows from $x_{T_{2}}<x_{T_{1}}$ by direct calculation and using \eqref{e:signcond}. \\
		Suppose $x_T>\xTs$. Then, $\eps_{\w_1}>\eps_{\w_2}>0$ holds due to \eqref{e:sgneps12} and $\eps_{\w_1}=\eps_{\w_2}$ at $x_T=\xTs$. Therefore, \eqref{e:sgnpartials} implies $\partial_{\eps_\w} \eps_\psi(\eps_{\w_2})>0$. Due to the sign of the slope and $\eps_\w^*>\eps_{\w_2}$, the function $\eps_\psi(\cdot;x_T)$ takes negative values for $\eps_\w<\eps_{\w_2}$. However, $x_T>x_{T_2}$ implies $c_2<0$ for $\eps_\w>\eps_{\w_2}$, which means stabilization is possible for $\eps_\w<\eps_{\w_2}$ only. Thus, as claimed, stabilization of the straight motion by $\eps_\psi> 0$ is not possible for $x_T>\xTs$. \\
		Suppose now $x_T=\xTs$. Here $\eps_{\w_1}=\eps_{\w_2}<\eps_{\w}^*$ so that $\partial_{\eps_\w} \eps_\psi(\eps_{\w_1})=0$ and $a>0$ due to \eqref{e:sgneps12}. In the relevant range $\eps_\w<\eps_{\w_2}$, the function $\eps_\psi(\cdot;x_T)$ again takes negative values and hence, stabilization is not possible; see Fig.~\ref{cases} (b).\\
		Since $x_T=x_{T_2}$ was discussed in (I) above, the last subcase is $x_{T_{2}}<x_T<\xTs$. Analogous to before, the signs and monotonicity relations \eqref{e:sgneps12} and \eqref{e:rel2} imply $0<\eps_{\w_1}<\eps_{\w_2}<\eps_{\w}^*$. It follows that $\eps_\psi(\cdot;x_T)$ is smooth for $\eps_\w\leq \eps_{\w_2}$, positive for $\eps_{\w_1}< \eps_\w< \eps_{\w_2}$ and negative for $\eps_\w< \eps_{\w_1}$. Its parabolic shape follows from the quadratic numerator of $\eps_\psi(\cdot;x_T)$.
		
		\textbf{(III). The case $\mathbf{x_{T_{1}}{\leq}x_{T_{2}}}$.} As shown above, stabilization is possible if $x_T<x_{T_1}$, but is impossible for $x_T>x_{T_1}$. The marginal case $x_T=x_{T_1}$ implies $c_0=0$, i.e., a zero eigenvalue, and at $\eps_\psi=0$ we obtain $c_1=- \cZ<0$, which implies instability. Therefore, stabilization in the sense of the theorem statement is not possible.}
	
	\medskip
	{\black We come to the degenerate situation \eqref{e:degen}, which needs $x_T\neq x_{T_{2}}$. Since \eqref{e:sgneps12} and \eqref{e:rel2} hold, $\eps_{\w_1}=\eps_\w^*$ requires $x_T< x_{T_1}$ and $x_T< x_{T_{2}}$ so that $a<0$ and $\eps_{\w_2}\leq0<\eps_{\w_1}$. Then the linear branch of \eqref{e:degen} has negative slope $a$ and is non-positive at $\eps_\w=0$. Hence, it lies outside the relevant range and the vertical branch is the stability boundary.}
	
	{\black It remains to prove non-zero derivatives of the real part of the critical eigenvalue(s) with respect to the control parameters $\eps_\psi=\eps_\psi^0, \eps_\w=\eps_\w^0$ on the stability boundary. 
		In the case $\eps_\psi^0=0$, a transverse crossing goes along the $\eps_\w$-axis. Here \eqref{e:charpoly} reduces to $\lambda^2+c_2\lambda+c_1=0$ so that $\partial_{\eps_\w} \lambda = -\frac{\partial_{\eps_\w}c_1}{c_2} = -\frac{\ctA-\ctB x_T}{c_2}$,  
		which is negative since $x_T<x_{T_1}$.\\
		In the case $\eps_\psi^0>0$, the critical eigenvalues $\pm\rmi\sqrt{c_1}$ have common real part $\mu$ for nearby $\eps_\w, \eps_\psi$. Upon implicitly differentiating \eqref{e:charpoly}, a direct  computation for $\eps\in\{\eps_\w, \eps_\psi\}$ yields
		\begin{equation}\label{e:muprime}
			\partial_{\eps} \mu(\eps_\w^0, \eps_\psi^0) = -\frac{\partial_{\eps} (c_1 c_2 - c_0)}{2 (c_1 + {c_2}^2)}(\eps_\w^0, \eps_\psi^0),
		\end{equation}
		which is well-defined since $c_1,c_2>0$ on the stability boundary. In the case \eqref{e:degen}, $\eps=\eps_\w$ for a transverse crossing and \eqref{e:degen} gives $\partial_{\eps} (c_1 c_2 - c_0)=\eps_\psi-a(\eps_\w-\eps_{\w_2})$. This is positive since we found above $a<0$, and in this case, $\eps_{\w_1}>\eps_{\w_2}$ holds. Otherwise, the boundary is in a smooth branch of $\eps_\psi(\eps_\w;x_T)$ and from its definition we have  
		\[
		\eps_\psi'(\eps_\w^0;x_T)= - \partial_{\eps_\w} (c_1 c_2 - c_0)/b,\quad b:=\partial_{\eps_\psi} (c_1 c_2 - c_0)(\eps_\w^0, \eps_\psi^0).
		\]
		Due to the fact that $b\neq 0$, \eqref{e:muprime} is non-zero for $\eps=\eps_\psi$. For  $\eps=\eps_\w$, the numerator can be written as $-b \eps_\psi'(\eps_\w^0;x_T)$, which vanishes on the stability boundary precisely at the local maximum when this is parabolic, i.e., for $x_{T_{2}}<x_T<\xTs$.}
\end{proof}

{\black The study of linear stability is completed and now we move to the nonlinear analysis.}

\subsection{Bifurcation Analysis}
\label{NonLinearANalysis}
In this section we analyze the nonlinear effects of the stabilizing control based on the linear stability analysis of the previous section. 
In order facilitate the bifurcation analysis, we first shift the straight motion equilibrium point $(u_0,0,0,0)$ of \eqref{e:4DexplicitODE} to the origin by writing the surge variable as $u = u_0 + \tilde{u}$. In terms of $(\tilde{u},v,\w,\psi)$ we thus obtain
\begin{equation}
	\begin{pmatrix}
		\dot{\tilde{u}}\\
		\dot{{v}}\\
		\dot{{\w}}\\
		\dot{\psi}
	\end{pmatrix}=
	M^{-1}\begin{pmatrix}
		\mL{v}\w+{X}_H(\tilde{u})+{\tau(u_0+\tilde{u}})\cos{\eta}\\
		-\mL{(u_0+\tilde{u})\w}+{Y}_H(\tilde{u})+{\tau(u_0+\tilde{u})}\sin{\eta}\\
		{N}_H(\tilde{u})+x_T{\tau(u_0+\tilde{u})}\sin{\eta}\\
		r
	\end{pmatrix}.
	\label{ThrusterSys}
\end{equation} 
In the following we omit the tilde from $\tilde{u}$ to simplify the notation. 
We rewrite, expand in $u$ and, based on the results in \cite{SteinMacher2020}, already omit all cubic-order terms, which yield
\begin{equation}
	\label{System_Origin}
	\begin{pmatrix}
		\dot{u}\\
		\dot{{v}}\\
		\dot{{\w}}\\
		\dot{\psi}
	\end{pmatrix} =
	\begin{pmatrix}
		\tau_{11}(\cos\eta -1) + \kk_1 u + \kk_2 u^2 + \kk_3 v\w + [\tau_{12}{u}+\tau_{13}{u}^2]\cos{\eta}\\
		p_{22} v + \kk_5 \w + \kk_6 uv + \kk_7 u\w + f_1\left( v, \w \right) + [\tau_{21}+\tau_{22}{u}+\tau_{23}{u}^2]\sin{\eta}\\
		p_{32} v + \kk_9 \w + \kk_{10} uv + \kk_{11} u\w + f_2\left( v, \w \right) + [\tau_{31}+\tau_{32}{u}+\tau_{33}{u}^2]\sin{\eta}\\
		r
	\end{pmatrix}.
\end{equation}
{\black The coefficients $\kk_j$,$\tau_{ij}$ result directly from the expansion, but their explicit forms are not used in the following abstract analysis. The functions $f_1, f_2$ read}
\begin{equation*}
	f_1\left( v, \w \right) = a_{11}v\abs{v} + a_{12}v\abs{\w} + a_{21}\w\abs{v} + a_{22}\w\abs{\w},\quad
	f_2\left( v, \w \right) = b_{11}v\abs{v} + b_{12}v\abs{\w} + b_{21}\w\abs{v} + b_{22}\w\abs{\w},
\end{equation*}
with coefficients

\medskip
\begin{center}
	\begin{tabular}{ l l } 
		$a_{11} = D^{-1}\left( (I_z+m_{rr})Y_{\beta\abs{\beta}}-m_{vr}N_{\beta\abs{\beta}} \right)$, & $a_{12} = D^{-1} (I_z+m_{rr})Y_{\beta\abs{\gamma}}$,\\
		$a_{21} = D^{-1} (I_z+m_{rr})Y_{\abs{\beta}\gamma}$, & $a_{22} = D^{-1} \left( (I_z+m_{rr})Y_{\gamma\abs{\gamma}}-m_{vr}N_{\gamma\abs{\gamma}} \right)$,\\
		$b_{11} = D^{-1}\left( (m+m_{vv})N_{\beta\abs{\beta}} -m_{rv}Y_{\beta\abs{\beta}} \right)$, & $b_{12} = -D^{-1} m_{rv}Y_{\beta\abs{\gamma}}$,\\
		$b_{21} = -D^{-1} m_{rv}Y_{\abs{\beta}\gamma}$, & $b_{22} = D^{-1}\left( (m+m_{vv})N_{\gamma\abs{\gamma}} -m_{rv}Y_{\gamma\abs{\gamma}} \right)$.
	\end{tabular}
\end{center}
\medskip
Using again that our analysis focuses on the vicinity of the origin, we expand the functions $\cos\eta=1+\calO(\eta^2)$, $\sin\eta=\eta+\calO(\eta^3)$ and omit higher-order terms. Using the control form $\eta = \eps_\w\w+\eps_\psi\psi$ from \eqref{Pcontrol} this reduces \eqref{System_Origin} to
\begin{equation}
	\label{4D_Origin}
	\begin{pmatrix}
		\dot{u}\\
		\dot{{v}}\\
		\dot{{\w}}\\
		\dot{\psi}\\
	\end{pmatrix} =
	\begin{pmatrix}
		p_{11} u + U(u,v,\w)\\
		p_{22} v + p_{23}\w + p_{24}\psi + \kk_6 uv + (\kk_7+\tau_{22}\eps_\w) u\w + \tau_{22}\eps_\psi u\psi + f_1\left( v, \w \right)\\
		p_{32} v + p_{33}\w + p_{34}\psi + \kk_{10} uv + (\kk_{11}+\tau_{32}\eps_\w)u\w + \tau_{32}\eps_\psi u\psi + f_2\left( v, \w \right)\\
		r\\
	\end{pmatrix},
\end{equation}
where $U(u,v,\w)$ is a second-order nonlinear function{\black. In agreement with \S\ref{s:linstab}, the} coefficients are 
$p_{11}=\kk_1+\tau_{12}$, $p_{23}=\kk_5+\tau_{21}\eps_\w$, $p_{24}=\tau_{21}\eps_\psi$, $p_{33}=\kk_9+\tau_{31}\eps_\w$, $p_{34}=\tau_{31}\eps_\psi$, 
{\black i.e.,} the entries of the matrix $A$ from \eqref{Matrix_A}.
In particular, the linear part $A$ possesses a diagonal block structure with the eigenvalue $\lambda_1=p_{11}$ and a lower right $3\times 3$-submatrix block.

As a first step towards the nonlinear analysis, we discuss the simpler case of the pitchfork bifurcation, and then turn to the more involved Hopf bifurcation analysis.

\subsubsection{Pitchfork bifurcation for $\eps_\psi=0$}\label{s:pitch}
We keep $\eps_\psi=0$ fixed so that the last equation in \eqref{4D_Origin} can be dropped. Hence, the linear part $A$ reduces to its upper left $3\times 3$-submatrix that consists of a block $\lambda_1=p_{11}<0$ as well as {\black the} $2\times 2$-block  
{\black $B=(p_{ij})_{2\leq i,j\leq 3}$}. 
Theorem~\ref{Thm_eps_curve} implies that $B$ possesses an eigenvalue $\lambda<0$ as well as an eigenvalue that crosses through zero as $\eps_r$ crosses through $\eps_{r_1}$, which is a unique positive value for the HTC. 
In particular, $B_0:=B|_{\eps_r = \eps_{r_1}}$ has eigenvectors $e_0$ for the zero eigenvalue and $e_1$ for $\lambda$, and the bifurcation upon changing $\eps_r$ will be purely of steady states. We thus seek solutions of the reduced \eqref{4D_Origin} with zero left-hand side. Since $f_1, f_2$ are quadratic of second-order modulus type, we expect a nonsmooth pitchfork bifurcation as in the truncated normal form $\sigma x|x| + \teps x =0$. {\black Here $x$ is the unknown variable and} the sign of the parameter $\sigma\neq 0$ determines the super- or subcritical character of the bifurcation. 

The first steady state equation of the reduced \eqref{4D_Origin} can be solved for $u$ by the implicit function theorem since $\lambda_1=p_{11}< 0$ and $U$ is nonlinear. The resulting solution satisfies $u=\calO(v^2+r^2)$ so that substitution into the second and third equations contributes to a term of cubic-order.
In order to unfold the bifurcation, we write $\eps_r = \eps_{r_1} + \teps$ {\black so that} $B = B_0 + \teps B_1$ {\black with a matrix} $B_1$ {\black that} has vanishing first column. Next we choose the eigenvectors $e_j^*$ of the adjoint $B_0^\intercal$ such that $\langle e_j, e_{1-j}^*\rangle =0$ and $\langle e_j, e_j^*\rangle =1$, $j=1,2$. Changing coordinates $(v,r) = x e_0 + y e_1$, we project the second and third equations onto $\mathrm{span}(e_1)$, which results in 
\begin{align*}
	0&=\langle (B_0+ \teps B_1)(xe_0+ ye_1),e_1^*\rangle + \langle f(v,r),e_1^*\rangle \\
	&= \lambda y + \teps (x \langle B_1 e_0,e_1^*\rangle + y\langle B_1 e_1,e_1^*\rangle) + \langle f(v,r),e_1^*\rangle.
\end{align*}
Again, we may solve by the implicit function theorem since $\lambda< 0$, which yields $y=y(\teps,x) = \calO(|\teps x| + x^2)$. It remains to solve the projection onto $\mathrm{span}(e_0)$, which is given by 
\begin{align*}
	0&=\langle (B_0+ \teps B_1)(x e_0+ ye_1),e_0^*\rangle  + \langle f(v,r),e_0^*\rangle \\
	&= \teps (x \langle B_1 e_0,e_0^*\rangle + y(\teps,x)\langle B_1 e_1,e_0^*\rangle) + \langle f(v,r),e_0^*\rangle\\
	& = \teps x \langle B_1 e_0,e_0^*\rangle+ x|x|\langle f(e_0),e_0^*\rangle + \calO(3),
\end{align*}
where $\calO(3)$ is of cubic-order in $\teps, x$, and where we used that $f$ is of second-order modulus form. Thus, the truncated bifurcation equation reads 
\begin{equation}\label{e:bifpitch}
	\teps x \langle B_1 e_0,e_0^*\rangle+ x|x|\langle f(e_0),e_0^*\rangle=0.
\end{equation}
{\black  This gives the normal form since $\langle B_1 e_0,e_0^*\rangle$ coincides with the derivative with respect to the eigenvalue that is non-zero by 
	Theorem~\ref{Thm_xT}.} 
Numerical evaluation for the default HTC values yields the negative coefficients
\begin{equation*}
	\langle B_1 e_0,e_0^*\rangle \approx -4.04\cdot 10^{-2}, \quad
	\langle f(e_0),e_0^*\rangle \approx -1.32\cdot 10^{-3}.
\end{equation*}
Therefore, a nonsmooth pitchfork bifurcation occurs and it is supercritical since the steady state is stable for $\teps>0$. This means that a branch of stable steady motions emerges when decreasing $\eps_r$ from $\eps_{r_1}$. Indeed, we numerically find this as plotted in Fig.~\ref{pitchfork_epsr_v}.
\begin{figure}
	\centering
	\begin{tabular}{ccc}
		\includegraphics[width=0.29\linewidth]{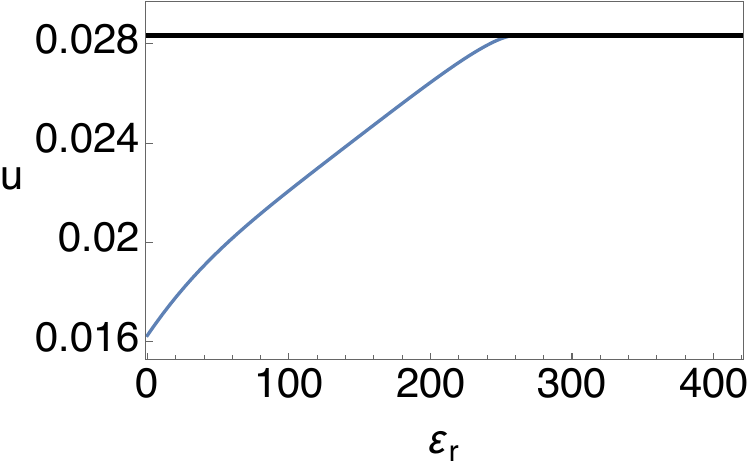} &
		\includegraphics[width=0.29\linewidth]{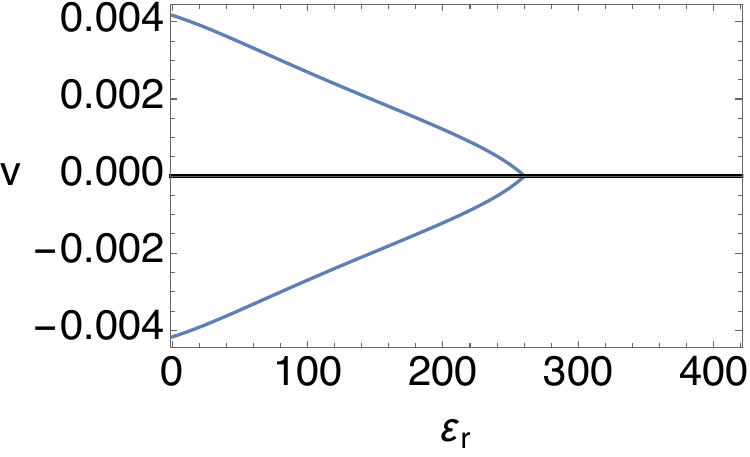} &
		\includegraphics[width=0.29\linewidth]{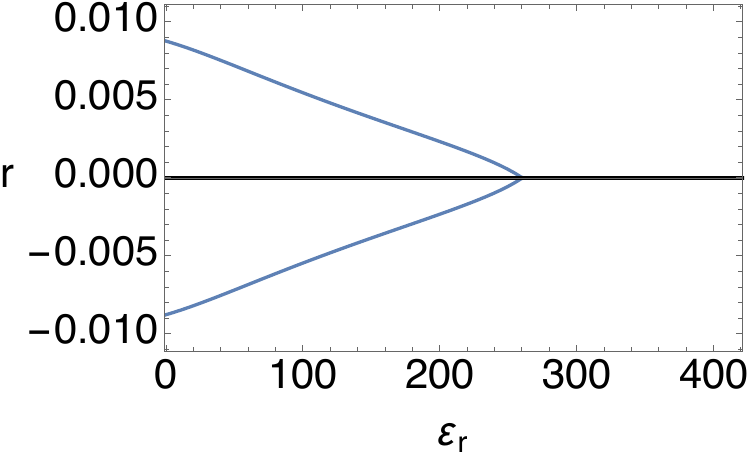}
	\end{tabular}
	\caption{The nonsmooth pitchfork bifurcation diagram of equilibrium points (blue) from the straight motion (black) in the 3D reduced system for $\eps_\psi=0$ computed by numerical continuation.}
	\label{pitchfork_epsr_v}
\end{figure}

{\black Within the 4D system \eqref{4D_Origin}, this pitchfork bifurcation manifests as follows. 
	We recall from  \S\ref{Stability_s_m} that for any $\eps_r\geq 0, \eps_\psi> 0$, the straight motion $(u_0,0,0,0)$ appears as the unique equilibrium point with positive $u=u_0$ solving \eqref{u_eq}. 
	However, for $\eps_\psi=0$ 
	a line of equilibria appears, given by $(u_0,0,0,\psi)$ with arbitrary $\psi$. 
	
	On the one hand, let $r_+>0>r_-$ be the $r$-components of the bifurcating equilibria in the 3D system. Then, due to $\dot\psi=r$, the $\psi$-components of the corresponding solutions in the 4D system with initial $\psi(0) = \psi_\pm(0)$ are, respectively, 
	\begin{equation}\label{e:bifpsi}
		\psi_\pm(t) := r_\pm t + \psi_\pm(0).
	\end{equation} 
	Near the bifurcation, $r_\pm\approx 0$ and $r_+>0>r_-$ hold, so that these solutions slowly drift unboundedly parallel to the line of equilibria in opposite directions. Since $\psi$ is an angular variable in the model, for large variations of $\psi$, the 4D phase space should be viewed as a cylinder. This is consistent with the equations due to the fact that for $\eps_\psi=0$ the vector field is independent of $\psi$. On this cylinder, the bifurcating solution are periodic orbits with winding number $\pm1$ for $r=r_\pm$, respectively.
	
	On the other hand, at $\eps_\psi=0$ the linear part $A$ of \eqref{4D_Origin} in \eqref{Matrix_A} possesses a zero eigenvalue for any $\eps_r$ and a double zero eigenvalue at $\eps_r=\eps_{r_1}$ with a $2\times 2$-Jordan block. The remaining two eigenvalues are negative. 
	The occurrence of the Jordan block suggests that the unfolding contains elements of a symmetric Bogdanov--Takens bifurcation beyond the pitchfork; cf.\ \cite{Carr}. Indeed, the stability boundary \eqref{Thm_eps_curve} is a curve of Hopf bifurcations in parameter space that will be further studied in the next section. We numerically find global bifurcations in \S\ref{Numerical_Bif_An}, but its rigorous analysis is beyond the scope of this paper. We note that the situation is degenerate by the line of equilibria and the nonsmooth nonlinearity, which already makes the following Hopf bifurcation analysis more involved. 
}

\subsubsection{Hopf bifurcations} \label{s:hopf}

{\black Consider any control parameters $\eps_r\geq0$, $\eps_\psi>0$ on the stability boundary defined by \eqref{function_eps}. 
	Due to Theorem~\ref{Thm_xT}, the eigenvalues of the linearization at $(u,v,r,\psi)=(u_0,0,0,0)$ consist of one complex conjugate pair with non-zero imaginary parts, and two negative real eigenvalues.} 
Thus, near any such $(\eps_r,\eps_\psi)$, the linear part $A$ of \eqref{4D_Origin} can be transformed into a real normal form as a $4\times 4$-block-diagonal matrix. {\black The blocks from top left to bottom right can be arranged as} $\lambda_1=p_{11}<0$ from \eqref{4D_Origin}, a $2\times 2$-matrix for the complex eigenvalues $\lambda_\pm=\mu\pm \rmi\omega$ with $\omega> 0$, and a real eigenvalue $\lambda_4<0$. {\black Due to Theorem~\ref{Thm_xT}, $\mu$ strictly decreases as $\eps_\w,\eps_\psi$ cross the stability boundary into the stable region.}

In the following, we perform the coordinate changes and identification of terms that allow to apply the theory from \cite{SteinMacher2020}. This will then justify to neglect the terms we drop in the coming steps. 
In particular, since $\lambda_1<0$ is already a block of the linear part $A$ from \eqref{Matrix_A}, the results from \cite{SteinMacher2020} imply  that we can neglect the first equation of  \eqref{4D_Origin} for our bifurcation analysis. Therefore, we subsequently analyze the lower right $3\times 3$-matrix $(v,r,\psi)$, which contains the linearly oscillating part. 
We define the matrix $\bT=(\ba|\bb|\bs)$ with columns $\ba, \bb, \bs\in\R^3$ from the eigenvectors $\zeta_+=\ba+i\bb$, $\zeta_-=\ba-i\bb$, $\bs$, of the eigenvalues $\mu+i\omega$, $\mu-i\omega$, $\lambda_4$, respectively. Notably, $\mu=0$ along the curve $\eps_\psi(\eps_\w{\black; x_T})$ {\black from \eqref{e:epspsixT}}. 

Setting $\xi:=(\xi_1,\xi_2,\xi_3)^\intercal=\bT^{-1}(v,r,\psi)^\intercal$, the $(v,r,\psi)$-subsystem of \eqref{4D_Origin} takes the form
\begin{equation}
	\label{3D_transformed}
	\begin{pmatrix}
		\dot{\xi_1}\\
		\dot{\xi_2}\\
		\dot{\xi_3}\\
	\end{pmatrix} =
	\begin{pmatrix}
		\mu		&-\omega	&0 \\
		\omega	&\mu	&0\\
		0		&0		&\lambda_4
	\end{pmatrix}
	\begin{pmatrix}
		\xi_1\\
		\xi_2\\
		\xi_3
	\end{pmatrix}+h_2(\xi)+\calR,
\end{equation}
where $h_2(\xi)$ contains all relevant quadratic-order terms. {\black These have the form}
\begin{equation*}h_2(\xi)=\bT^{-1}
	\begin{pmatrix}
		f_1(\bT\cdot\xi)\\
		f_2(\bT\cdot\xi)\\
		0\\
	\end{pmatrix}=
	\begin{pmatrix}
		\T_{11}g_1(\xi) + \T_{12}g_2(\xi)\\
		\T_{21}g_1(\xi) + \T_{22}g_2(\xi)\\
		\T_{31}g_1(\xi) + \T_{32}g_2(\xi)\\
	\end{pmatrix},
\end{equation*}
with quadratic-order functions $g_1, g_2$ discussed below. {\black Relevant will be the coefficients } 
\begin{align*} 
	\T_{11}&=\frac{b_2s_3-b_3s_2}{\det(\bT)}, &\T_{12}&=\frac{-b_1s_3+b_3s_1}{\det(\bT)}, 
	&\T_{21}&=\frac{-a_2s_3+a_3s_2}{\det(\bT)}, \quad &\T_{22}&=\frac{a_1s_3-a_3s_1}{\det(\bT)},
\end{align*} 
{\black where} $a_j, b_j, s_j, j\in\{1,2,3\}$ {\black are} the components of the vectors $\ba, \bb, \bs$, respectively.

All missing terms are collected in $\calR$, including the nonlinear terms involving $u$, i.e., $uv, ur, u\psi$, from \eqref{4D_Origin}, which turn out to be irrelevant at leading-order due to \cite[\S2, \S4]{SteinMacher2020}. Furthermore, \cite[Thm 2.3]{SteinMacher2020} implies that the third component of the transformed system \eqref{3D_transformed}, $\xi_3$, will belong to higher-order terms 
as well. {\black With these preparations, and} using the shorthand $[[\cdot]]:=\cdot|\cdot|$ {\black for the second-order modulus terms}, the relevant functions $g_1, g_2$ read 
\begin{align*}
	g_1(\xi_1,\xi_2) =& a_{11}[[a_1\xi_1+b_1\xi_2]] + a_{12}(a_1\xi_1+b_1\xi_2)\abs{a_2\xi_1+b_2\xi_2} \\
	&+ a_{21}(a_2\xi_1+b_2\xi_2)\abs{a_1\xi_1+b_1\xi_2} + a_{22}[[a_2\xi_1+b_2\xi_2]],\\
	g_2(\xi_1,\xi_2) =& b_{11}[[a_1\xi_1+b_1\xi_2]] + b_{12}(a_1\xi_1+b_1\xi_2)\abs{a_2\xi_1+b_2\xi_2} \\
	&+ b_{21}(a_2\xi_1+b_2\xi_2)\abs{a_1\xi_1+b_1\xi_2} + b_{22}[[a_2\xi_1+b_2\xi_2]].
\end{align*}
{\black At the linear and quadratic-order, the first two equations of \eqref{3D_transformed} are independent of $\xi_3$ such that only these two equations are relevant for the leading-order bifurcation analysis}. 
Setting $(\xi_1,\xi_2)=(\rp\cos\varphi,\rp\sin\varphi)$, these become
\begin{align}
	\begin{cases}
		\dot{\rp} &= \mu \rp+\chi(\varphi) \rp^2 + \calO(\rp^3),\\
		\dot{\varphi} &= \omega + \Omega(\varphi)\rp + \calO(\rp^2).
	\end{cases}
	\label{polar_coord}
\end{align}  
{\black Decisive for the bifurcation is the expression $\chi(\varphi)$, which can be determined using \cite[Thm 2.3]{SteinMacher2020}. With the abbreviations} $c:=\cos\varphi, s:=\sin\varphi$ and $\RR:=c\T_{11}+s\T_{21}, \SS:=c\T_{12}+s\T_{22}$ {\black this gives} 
\begin{gather}
	\begin{aligned}\label{eq:chi}
		\chi(\varphi) =& (a_{11}\RR+b_{11}\SS)[[a_1c+b_1s]] + (a_{12}\RR+b_{12}\SS)(a_1c+b_1s)\abs{a_2c+b_2s} \\
		&+ (a_{21}\RR+b_{21}\SS)(a_2c+b_2s)\abs{a_1c+b_1s} + (a_{22}\RR+b_{22}\SS)[[a_2c+b_2s]].
	\end{aligned}
\end{gather}

Before formulating the theorem, we recall the two generic forms of a Hopf bifurcation: in the supercritical (or safe) case the stable fixed point {\black becomes unstable} when the stable periodic orbit is created{\black, so that the stable periodic orbit coexist with the unstable equilibrium. I}n the subcritical (or unsafe) scenario the unstable fixed point becomes stable when an unstable periodic orbit is born. {\black We next show that this criticality is determined by a non-zero sign of 
	\[
	\Sigma:=\int_0^{2\pi}\chi(\varphi)\dd\varphi.
	\]
	Here $\chi(\varphi)$ depends on the chosen $\eps_\w,\eps_\psi$ on the stability boundary \eqref{e:epspsixT}.}

\begin{theorem}\label{t:crit}
	{\black Let $\eps_\psi>0$ and $x_T<\xTs$, if $x_{T_2}<x_{T_1}$, or $x_T<x_{T_1}$, if $x_{T_1}<x_{T_2}$; see \eqref{e:xTthresholds}. 
		Then, as the control parameter values cross the stability boundary \eqref{function_eps}, 
		the straight motion equilibrium} undergoes a Hopf bifurcation which is supercritical if 
	{\black $\Sigma<0$} 
	and subcritical {\black if $\Sigma>0$}. Furthermore, {\black in terms of the real part $\mu$ of the critical eigenvalues,} the amplitude of the periodic orbit is given by
	\begin{equation}\label{e:amp}
		\rp = -\frac{2\pi}{\Sigma} \mu + \calO\left(\mu^2\right).
	\end{equation}
\end{theorem}

{\black \begin{remark}\label{r:windHopf}
		The  $\psi$-component of the bifurcating periodic orbits lies in an order $\mu$ neighborhood of the equilibrium value $\psi=0$. Therefore, these solutions have winding number zero in the 4D phase space viewed as a cylinder with $\psi$ from the unit circle. 
\end{remark}}

\begin{proof}
	We recall from Theorem  
	{\black \ref{Thm_xT} that the real part $\mu$ of a simple} complex conjugate pair of eigenvalues crosses the imaginary axis {\black with non-zero derivative as} $(\eps_\w,\eps_\psi)$ crosses the {\black graph} of \eqref{function_eps}. Thus, the equivalent system \eqref{4D_Origin} is amenable to \cite[Cor. 4.7]{SteinMacher2020}. In particular,  the non-oscillatory linear part is invertible since $\lambda_1,\lambda_4< 0$ on the stability boundary by Theorems~\ref{Thm_eps_curve}  {\black and \ref{Thm_xT}}. 
	Due to \cite[Cor. 4.7]{SteinMacher2020}, the criticality of the Hopf bifurcation is that of \eqref{polar_coord}, and determined by the sign of {\black $\Sigma$}. Finally, the directly related claimed leading-order amplitude of the bifurcating periodic orbits follows from \cite[Prop. 3.8]{SteinMacher2020}.
\end{proof}

{\black The quadratic nonsmooth nature of the nonlinear terms is reflected in the leading-order linear dependence of the amplitude $r$ in \eqref{e:amp}, which would be a square root in the smooth case. This difference to the smooth situation appears also in the unfolding of the pitchfork bifurcation in \S\ref{s:pitch} with respect to $\eps_\w$. Similar to the smooth case, concrete computations of $\Sigma$ are complicated by the fact that the eigenvectors enter non-trivially. Hence}, even with the formula \eqref{eq:chi} and despite {\black the fact} that all terms in $\chi$ can be explicitly integrated, it appears difficult to determine the sign of $\Sigma$ analytically. 

Nevertheless, numerical evaluation of all these quantities and thus of $\Sigma$ is highly accurate and almost instantaneous on modern computers.
This makes it possible to readily predict the criticality as well as the leading-order expansion of the bifurcating periodic solutions. 
{\black 
	Rather than $\Sigma$, we compute the leading-order dependence of the amplitude in terms of the control parameter $\eps_\w$. This gives quantitative insight into the sensitivity of the resulting ship motion in addition to information on the criticality of the bifurcation. Let $(\eps_\w^0, \eps_\psi^0)$ denote some chosen control parameters on the stability boundary and set $\tilde\eps_\w:=\eps_\w-\eps_\w^0$. Expanding \eqref{e:amp} in terms of $\tilde\eps_\w$ yields 
	\begin{equation}\label{e:ampmu}
		\rp = -\ampc \tilde\eps_\w + \calO\left(\tilde\eps^2\right), \quad \ampc:=\frac{2\pi}{\Sigma}\partial_{\eps_\w} \mu(\eps_\w^0, \eps_\psi^0),
	\end{equation}
	which can be evaluated using \eqref{e:muprime}. 
	By Theorem~\ref{Thm_xT} we know that $\partial_{\eps_\w} \mu(\eps_\w^0, \eps_\psi^0)<0$ holds, except to the right of the maximum of the stability boundary in case it has parabolic shape. Hence, $\ampc$ has the opposite sign of $\Sigma$ except in the latter situation. In Fig.~\ref{Sigma_epsR_2pi_xTn03_p01} (a) we plot the results for the classical HTC, where the stability boundary is monotone. Since $b_r>0$, the Hopf bifurcation is supercritical for any stabilizing P-control, which means a safe control scenario. The dependence of $b_r$ on $\eps_\w$ is non-monotone in the interior, but grows significantly and monotone near the right boundary, where $\eps_\psi$ tends to zero. As a result, the amplitude of the bifurcating stable periodic orbits is more sensitive to $\eps_\w$ in this region of the stability boundary. 
}

Concerning {\black changes in} the thruster position {\black from the HTC default}, we recall from \S\ref{s:linstab} that there are four {\black controllable} cases as shown in Fig.~\ref{sketch_xT2}. At $x_T=-0.3$, which is case 2, {\black stabilization by $\eps_\psi>0$} is possible for approximately $\eps_\w\in(41.9,304.9)$; 
{\black at $\eps_\psi=0$ any $\eps_\w>305$ stabilizes the straight motion equilibrium; see Fig.~\ref{eps_curve_m_xr_n03} (a)}. We plot the resulting values of {\black $\ampc$} in Fig.~\ref{Sigma_epsR_2pi_xTn03_p01} (b). {\black Here $b_r$ increases monotonically in $\eps_\w$, again with stronger growth near the right boundary, where $\eps_\psi$ tends to zero.} 
{\black For $x_T=-0.16$, which corresponds to case 3 in Fig.~\ref{sketch_xT2}, we omit the plot in which we find a qualitatively reflected $\ampc$ graph}{\black, so that again the sensitivity is large for small values of $\eps_\psi$ on the stability boundary. The results of the last case 4 with $x_T=0.16$ are plotted in Fig.~\ref{Sigma_epsR_2pi_xTn03_p01} (c). In this case, the interval $\eps_\w\in(514,569.9)$ of $\eps_\w$-values that admit stabilization by $\eps_\psi$ is bounded by the zeros of the parabolic shaped stability boundary; see Fig.~\ref{cases} (c). 
	Its maximum leads to a sign change of $\ampc$, consistent with a supercritical bifurcation on all parts of the boundary. Here $\eps_\psi$ is small near both endpoints, but the sensitivity is larger near the left boundary.}

\begin{figure}
	\centering
	\begin{tabular}{ccc}
		\includegraphics[width=0.3\linewidth]{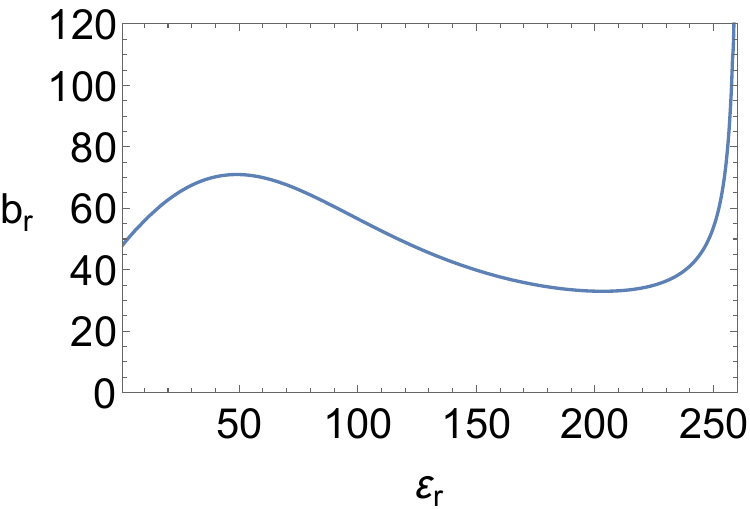} &
		\includegraphics[width=0.3\linewidth]{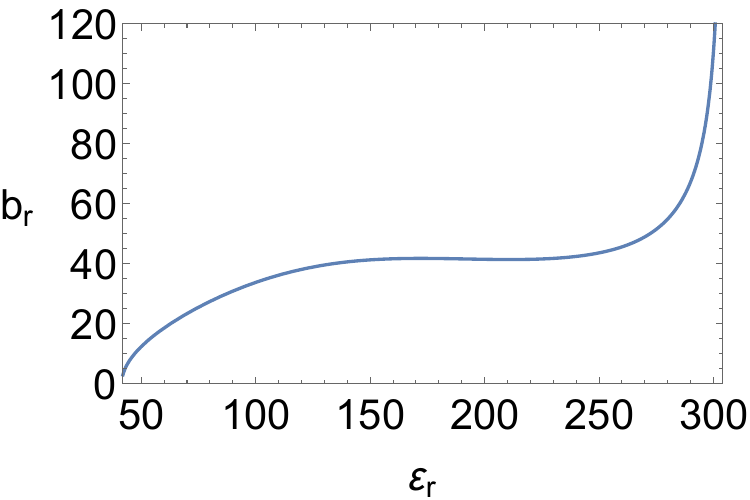} &
		\includegraphics[width=0.3\linewidth]{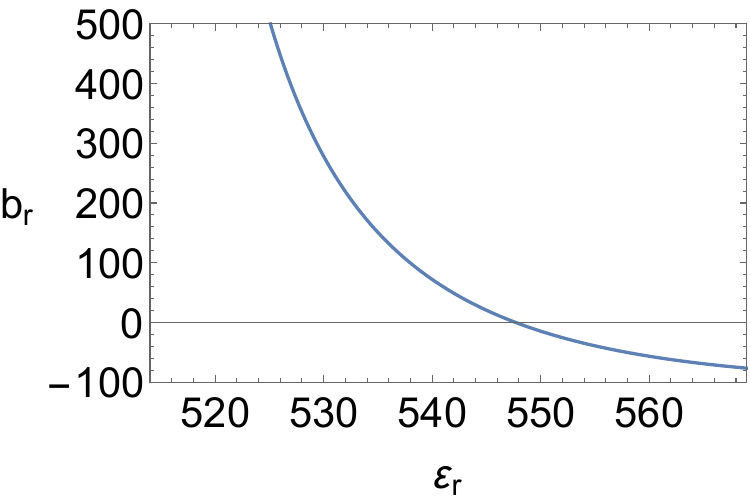}\\
		(a) & (b) & (c)
	\end{tabular}
	\caption{Values of $\ampc$ for {\rm(a)} $\eps_\w\in[1,260]$ in the HTC, see Fig.~\ref{sketch_xT2} case 1; {\rm(b)} $\eps_\w\in[42,304]$ with $x_T=-0.3$, see Fig.~\ref{sketch_xT2} case 2; and {\rm(c)} $\eps_\w\in[514,569]$ with $x_T=0.16$, case 4 in Fig.~\ref{sketch_xT2}. The graph in (c) is capped since the $\ampc$ values become very large.}
	\label{Sigma_epsR_2pi_xTn03_p01}
\end{figure}

\section{Numerical Bifurcation Analysis}\label{Numerical_Bif_An}
In this section we present numerical results that corroborate and illustrate the analysis of the previous sections based on implementing the model in the continuation software AUTO \cite{auto}. 
{\black In this way the stability boundary for the HTC can be computed by numerically tracking the Hopf bifurcation locus.
	We find that this agrees, up to numerical error, with the analytical prediction \eqref{function_eps} of Theorem~\ref{Thm_eps_curve}.}

\subsection{Continuation of Periodic Solutions}
We have employed numerical bifurcation and continuation to compute the periodic orbits that bifurcate from this stability boundary along curves in the $(\eps_\w, \eps_\psi)$-plane for $\eps_\psi>0$. 
{\black Confirming the} analytically predicted 
supercritical nature of the Hopf bifurcations, {\black these periodic orbits exist in a region below the stability boundary; see Fig. \ref{eps_curve_m}}. As shown in \S\ref{s:pitch}, along the $\eps_\w$-axis, i.e., for $\eps_\psi=0$, the Hopf bifurcation {\black is replaced by a supercritical} pitchfork bifurcation 
{\black in the reduced} 3D system for $u,v,r$. 

{\black Let $(u_\pm,v_\pm, r_\pm)\in\R^3$ denote the bifurcating equilibria in the 3D reduced system. In the 4D system \eqref{4D_Origin}, 
	these take the form $\bv_\pm(t)=(u_\pm,v_\pm,r_\pm,\psi_\pm(t))\in\R^4$ with $\psi_\pm$ from \eqref{e:bifpsi}. 
	In \S\ref{s:pitch} we have identified these as periodic orbits in the 4D phase space viewed as a cylinder at $\eps_\psi=0$.  
	However, for any $\eps_\psi>0$, the vector field depends linearly on $\psi$ through the P-control \eqref{Pcontrol}. This is generally inconsistent for a model with $\psi$ being the yaw angle, i.e., with the cylindrical geometry of the 4D phase space. This can be ignored as long as the solutions under consideration have $\psi$ contained in an interval of length less than $2\pi$. In particular, this is the case in the stability and Hopf bifurcation analysis, where the bifurcating solutions have winding number zero; see Remark~\ref{r:windHopf}. 
	Nevertheless, since $\psi_\pm(t)$ are unbounded this issue has to be addressed when studying the perturbation of $\bv_\pm$ for $\eps_\psi\approx0$. Therefore, for such a consideration the control law $\eta$ in \eqref{Pcontrol} needs to be made periodic in $\psi$. 
	
	\begin{remark}\label{r:persist}
		Consider a control that is $2\pi$-periodic in $\psi$, that equals \eqref{Pcontrol} at $\eps_\psi=0$, and that is smooth in $u,v,\w,\psi,\eps_\w,\eps_\psi$. Assume that the linearization of the 3D reduced system in $(u_\pm,v_\pm,r_\pm)$ has no eigenvalues on the imaginary axis. (We have numerically verified this for the HTC for all $\eps_\w\geq 0$.) 
		We claim that for $0<\eps_\psi\ll 1$ the corresponding periodic orbits $\bv_\pm$ are perturbed to nearby periodic orbits. Indeed, the linearization of \eqref{4DSystem} at each point on $\bv_\pm$ has one zero eigenvalue in the direction of the flow and the three eigenvalues of the 3D reduced system. Therefore, the orbit is a compact, normally hyperbolic invariant manifold without boundary, which is structurally stable under perturbations of a parameter.
	\end{remark}
}

For definiteness, we choose to replace the linear $\psi$-term {\black in the control law} by a sinus,
\begin{equation}\label{e:sinus}
	\eta= \eps_\w\w+\eps_\psi\sin(\psi).
\end{equation}
{\black This satisfies the assumptions of Remark~\ref{r:persist}. Since also $\sin'(0)=1$, we have that} the stability boundary, the supercritical nature of the bifurcations and {\black $\bv_\pm$} are identical to before. However, this choice {\black introduces the new straight motion equilibria $(u,v,r,\psi)=(u_0,0,0,\pm\pi)$, where the ship direction is exactly opposite to the a priori chosen reference direction $\psi=0$. This choice of $\eta$ also has an impact on the continuation in $\eps_\w$ and $\eps_\psi$ of the periodic orbits that bifurcate from the Hopf bifurcations on the stability boundary. We numerically found that there is 
	no qualitative change for $\eps_\psi\geq 1$. Here we have chosen $1$ as a relative small reference value and will discuss smaller values later.} Instead of the sinus we could choose a function that is the identity for $|\psi|$ below some threshold, but globally smooth and periodic. {\black This} would provide results that fully coincide with the P-control as long as $|\psi|$ is below the threshold.

For the sinusoidal control law, we find numerically the presence of a smooth surface of periodic solutions, parameterized by the control parameter values `under' the stability boundary curve within the range $\eps_r\geq 0, \eps_\psi\geq 1$. We have confirmed this along a number of different axis-aligned curves, including the $\eps_\psi$-axis, {\black where} $1\leq \eps_\psi<45.8$. 
In Fig.~\ref{bif_diag_epspsi_v} we plot different views of the bifurcation diagram {\black for $\eps_\w\approx 21.2$, chosen as an arbitrary non-zero value within the unstable region.} 
This is completely analogous for 
continuations along different curves. 
{\black I}n Fig.~\ref{periodic_orbits_epsr10} we plot some views  
of periodic solutions {\black on this branch.  
	Similar to the results in \cite{STC2007}, we observe a monotone} 
growth of the $\psi$-range as $\eps_\psi$ decreases.

\begin{figure}
	\centering
	\begin{tabular}{ccc}
		\includegraphics[width= 0.3\linewidth]{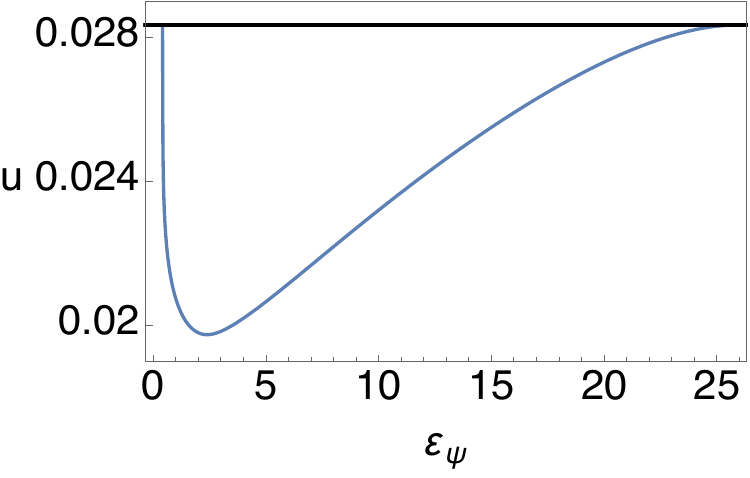} &
		\includegraphics[width= 0.3\linewidth]{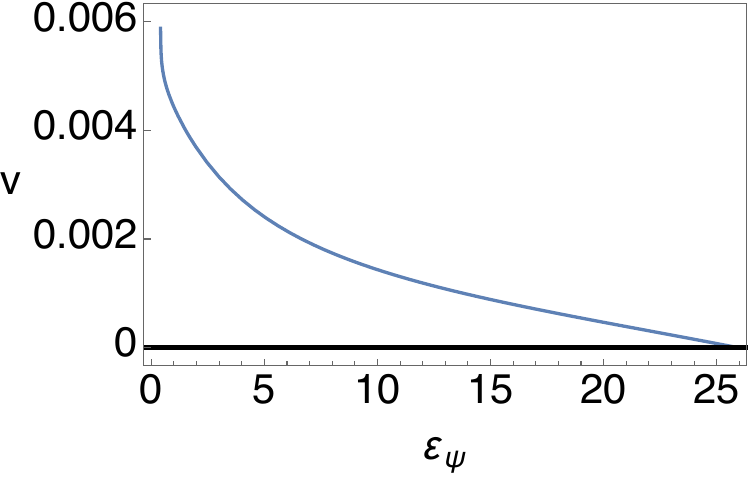} &
		\includegraphics[width= 0.3\linewidth]{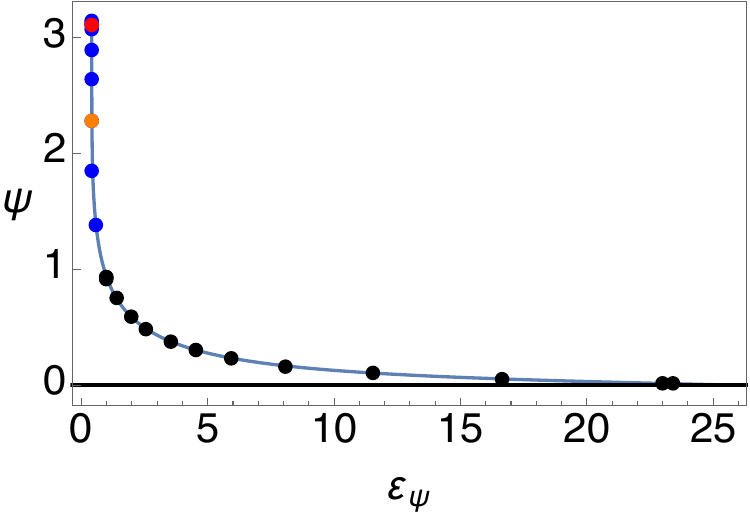}
	\end{tabular}
	\caption{{\black Different views of the global branch of periodic orbits emanating from the (nonsmooth) Hopf bifurcation at $\eps_\w\approx 21.2, \eps_\psi\approx 25.9$. At $\eps_\psi\approx 0.408$ the branch appears to terminate in a heteroclinic bifurcation. Plotted are the maximum values of $u,v,\psi$ of the periodic orbits; $\psi$ is 
			in radians. The bullets in the $\psi$-diagram mark the location of the solutions plotted in Fig.~\ref{periodic_orbits_epsr10}: the black bullets are solutions in Fig.~\ref{periodic_orbits_epsr10} (a,b), and the other colors match with the corresponding colored solutions in Fig.~\ref{periodic_orbits_epsr10} (c).}}
	\label{bif_diag_epspsi_v}
\end{figure}

\begin{figure}
	\centering
	\begin{tabular}{ccc}
		\includegraphics[height=0.3\linewidth]{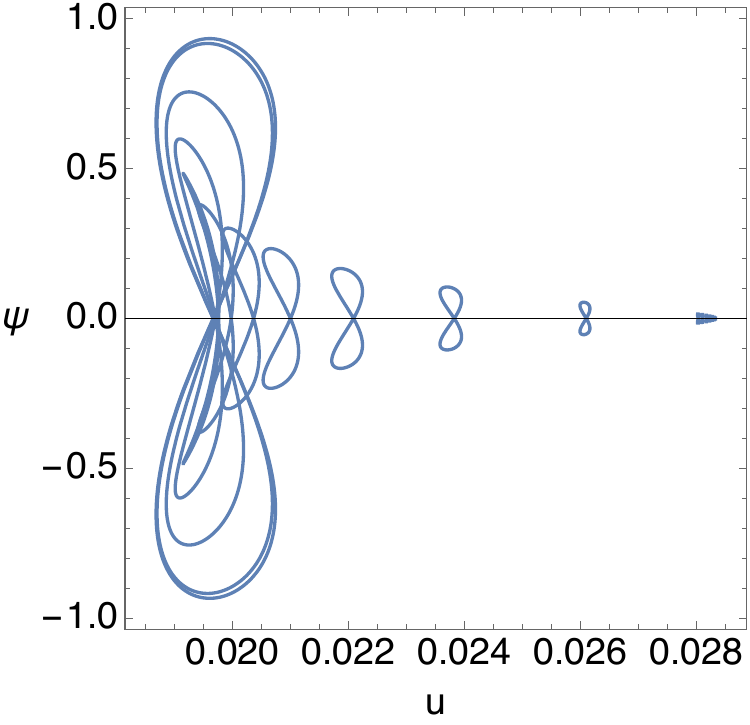}&
		\includegraphics[height=0.3\linewidth]{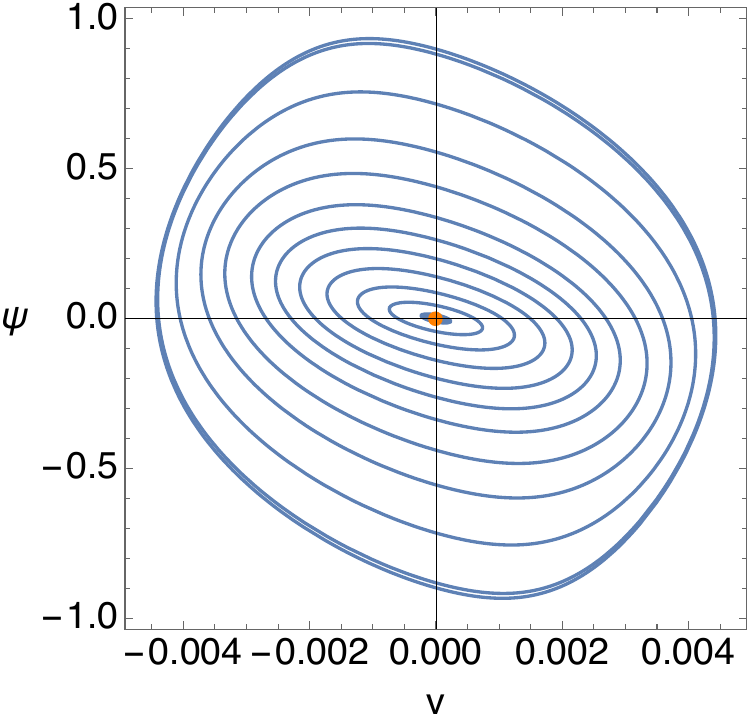} & 
		\includegraphics[height=0.3\linewidth]{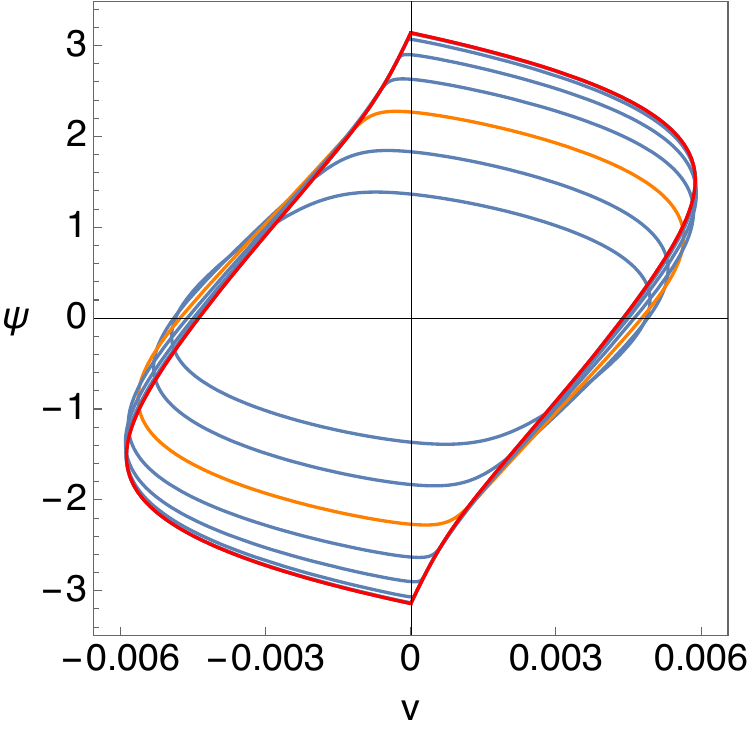}\\
		(a) & (b) & (c)
	\end{tabular}
	\caption{Sample profiles of periodic orbits from Fig.~\ref{bif_diag_epspsi_v}. In {\rm(a,b)} $\eps_\psi$ values are between $25.9$ at the Hopf bifurcation and $\eps_\psi=1$. In {\rm(c)} $\eps_\psi$ values are between $1$ and the heteroclinic bifurcation point $\eps_\psi\approx 0.408$ with period $T\approx 18570$. $\psi$ is given in radians.}
	\label{periodic_orbits_epsr10}
\end{figure}

\begin{figure}
	\centering
	\begin{tabular}{ccc}
		\includegraphics[height=0.2\linewidth]{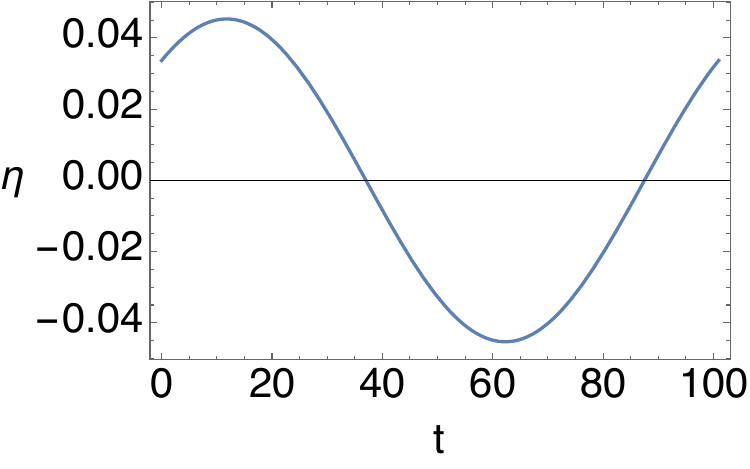} & 
		\includegraphics[height=0.2\linewidth]{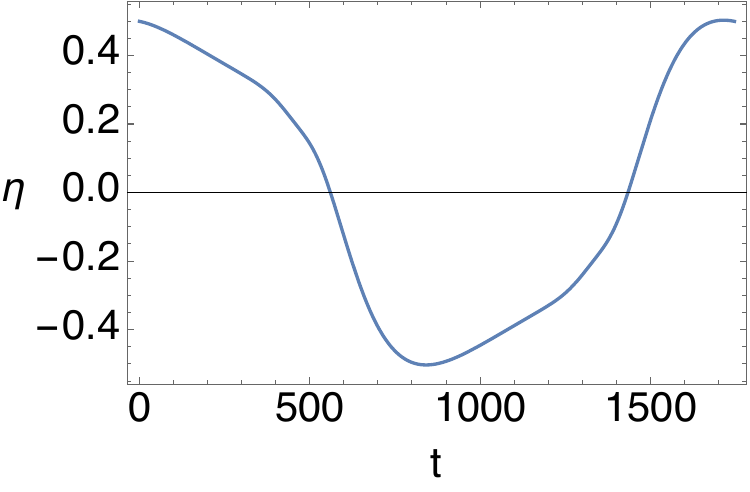} & 
		\includegraphics[height=0.2\linewidth]{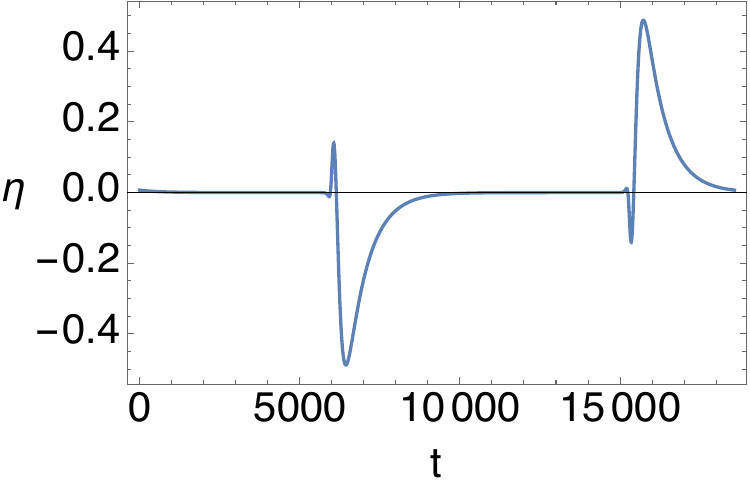}\\
		(a) & (b) & (c)
	\end{tabular}
	\caption{{\black Steering} angles ${\eta} = \eps_rr+\eps_\psi\sin(\psi)$ in {\black radians} over one period $t\in[0,T]$ of 
		{\black selected solutions from Fig.~\ref{periodic_orbits_epsr10}}: 
		{\rm(a)} $\eps_\psi\approx 25.6$ and $T\approx 101$ (near the origin of Fig.~\ref{periodic_orbits_epsr10} (b)); 
		{\rm(b)} $\eps_\psi\approx 0.419$ and $T\approx 1747$ (orange in Fig.~\ref{periodic_orbits_epsr10} (c)); 
		{\rm(c)} $\eps_\psi\approx 0.408$ and $T\approx 18570$ (red in Fig.~\ref{periodic_orbits_epsr10} (c)).
	}
	\label{periodic_orbit_etas}
\end{figure}

{\black On the one hand, decreasing $\eps_\psi$ further,} the branch appears to terminate for 
$\eps_\psi\approx 0.408$ in a heteroclinic bifurcation. 
{\black It seems that here} a heteroclinic cycle {\black exists, which consists of a symmetric pair of heteroclinic orbits} between 
{\black the additional} straight motion equilibrium points {\black with $\psi\approx\pm \pi$ \si{rad}. 
	Along each of the heteroclinic solutions, the ship direction completes a full circle and asymptotes to the direction opposite to the reference $\psi=0$. On one of these orbits the rotation is clockwise and on the other it is counter-clockwise.}
We show one view {\black of a periodic solution} near this cycle in Fig.~\ref{periodic_orbits_epsr10} (c) {\black as a red curve}.  

{\black On the other hand, increasing $\eps_\psi$ from zero, we observe that $\bv_\pm$ perturb to nearby periodic solutions for $0<\eps_\psi\ll 1$. This confirms the prediction from Remark~\ref{r:persist}. Further increasing $\eps_\psi$, 
	each of these solutions appears to terminate at $\eps_\psi\approx 0.408$ in one of the heteroclinic orbits that together form the aforementioned heteroclinic cycle. 
	In Fig.~\ref{periodic_orbits_epsr10} (c) this heteroclinic orbit is near one of the parts of the red trajectory with fixed sign of $v$. 
	
	This difference between decreasing and increasing $\eps_\psi$ can be understood from the winding numbers of the periodic orbits in the cylinder. The winding number is a homotopy invariant and thus must remain constant along smooth branches. The orbits $\bv_\pm$ that bifurcate from $\eps_\psi=0$ according to Remark~\ref{r:persist} have winding numbers $\pm 1$, respectively. Hence, the heteroclinic cycle as a whole has winding number zero, which is compatible with the zero winding number of the periodic orbits that bifurcate from the Hopf points.} 
It appears that this is the way in which the entire region `under' the stability boundary up to the $\eps_\w$-axis is organized.
{\black We suspect that the heteroclinic cycle and its unfolding can be rigorously studied by analyzing the Bogdanov--Takens-type point at $\eps_\psi=0$, $\eps_\w=\eps_{\w_1}$ in the globally cylindrical geometry. In terms of $x_T$ this unfolding might possess additional structure at $x_T=x_{T_-}$, where the stability boundary is vertical, and at $x_T=\xTs$, where the stability region has shrunk to a point; compare Fig.~\ref{sketch_xT2}.} 

\begin{figure}
	\centering
	\begin{tabular}{ccc}
		\includegraphics[height=0.13\linewidth]{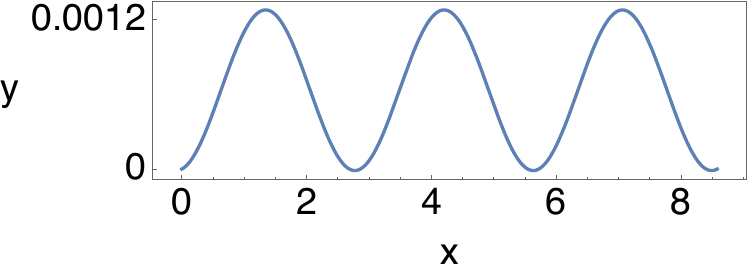}&
		\includegraphics[height=0.19\linewidth]{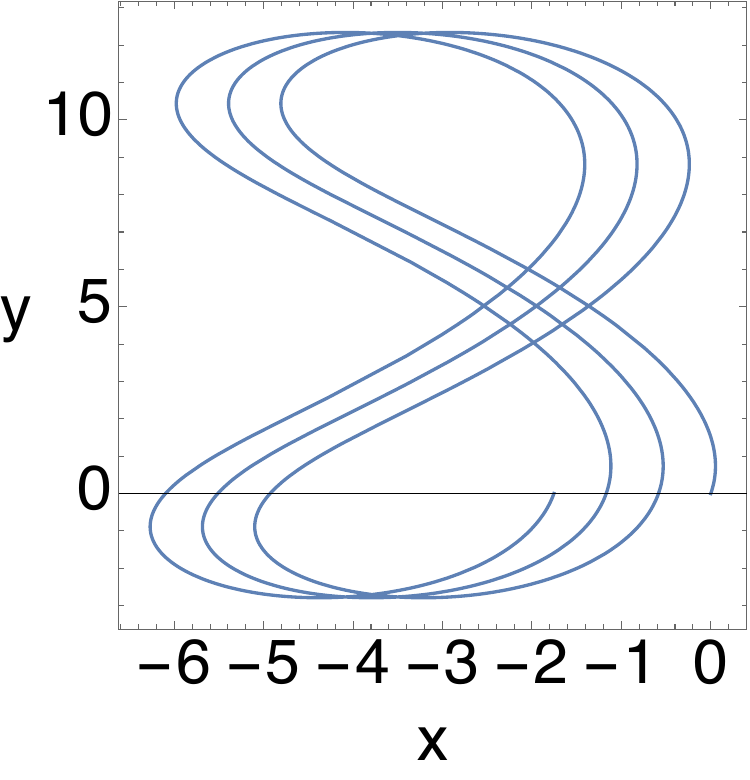} & 
		\includegraphics[height=0.18\linewidth]{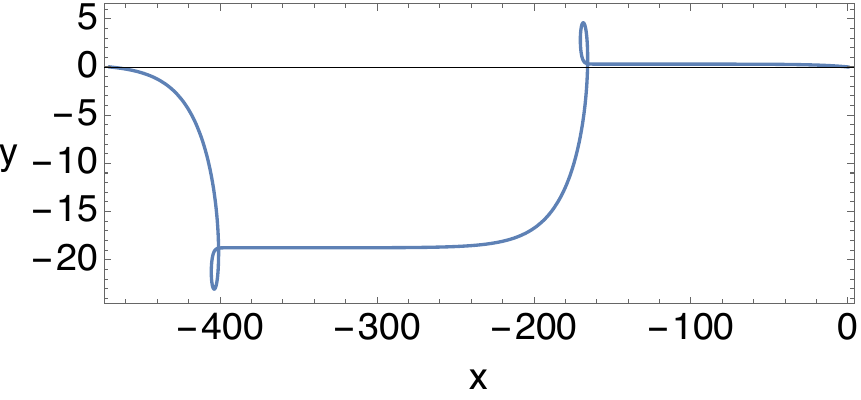}\\
		(a) & (b) & (c) 
	\end{tabular}
	\caption{Earth-fixed coordinate plots of the solutions from Fig.~\ref{periodic_orbit_etas} (a,b,c), respectively. 
	}
	\label{periodic_orbit_earth2}
\end{figure}

For further illustration, we plot the steering angle $\eta= \eps_\w\w+\eps_\psi\sin(\psi)$ of selected periodic solutions in Fig.~\ref{periodic_orbit_etas}. As expected, for a solution close to the Hopf bifurcation the steering angle just mildly oscillates. 
The variations are stronger for solutions that are further from it, but only up to angles of around $0.5$ \si{rad} $ \approx 28.6^\circ$; cf.\ Fig.~\ref{periodic_orbit_etas} (b,c).
	
\medskip
In conclusion, it appears that the bifurcating periodic solutions are confined to the region in parameter space in which the straight motion is unstable. In this region, we find either a periodic orbit with winding number zero, a symmetric pair of periodic orbits with winding number $1$ and $-1$, or a heteroclinic cycle. We expect that this global arrangement of periodic orbits in parameter space persists for periodic function in \eqref{e:sinus} that are perturbations of the sinus. However, typically there will be quantitative changes, e.g., the additional equilibria will not be exactly opposite to the reference direction $\psi=0$. 

\subsection{Periodic Solutions in Earth-Fixed Coordinates}
We {\black determine} the resulting ship {\black motions in the} Earth-fixed position coordinates $(x,y)\in\R^2$. These can be conveniently expressed in complex form as $z=x+ iy\in\C$ and the relation of $z(t)$ {\black to a ship-fixed trajectory $(u,v,\psi)(t)$} is given by $\dot z=(u+iv)\exp(i\psi)$. 
{\black An equilibrium point with constant $(u,v,\psi)$ corresponds to the straight motion of the ship if $r=\dot \psi=0$, and to a circular motion if $r=\dot \psi\neq 0$. Nevertheless, for periodic solutions $(u,v,\psi)(t)$, the resulting Earth-fixed positions do not need to be periodic.} 

{\black We plot tracks of Earth-fixed positions for selected solutions from Fig.~\ref{periodic_orbits_epsr10} in Fig.~\ref{periodic_orbit_earth2}.} 
{\black The result for a periodic solution near the Hopf bifurcation is shown in Fig.~\ref{periodic_orbit_earth2} (a). 
	The equilibrium point involved in the Hopf bifurcation is a straight motion along the $x$-axis, and the track of the bifurcated periodic solution slightly oscillates about this. The continuation further away from the bifurcation point, e.g., for $\eps_\psi=1$, results in tracks that oscillate with larger amplitude, but continue to monotonically drift along the $x$-axis. 
	However, decreasing $\eps_\psi$ further, the track turns into a near figure eight shape with self-intersections as shown in Fig.~\ref{periodic_orbit_earth2} (b); here $\eps_\psi\approx 0.419$. The $x$-component is non-monotone, but on average there is a drift along the $x$-axis. 
	Lastly, in Fig.~\ref{periodic_orbit_earth2} (c) we plot the track along one period of the periodic solution that is close to the heteroclinic cycle mentioned above for $\eps_\psi\approx 0.408$. 
	The track consists of two phases that each resemble one of the heteroclinic orbits involved in the heteroclinic cycle. These heteroclinic orbits are related by reflection and the track also appears to be symmetric about the mid-point between the global $y$-extrema. 
	The equilibria of the heteroclinic cycle correspond to straight motion opposite to the reference direction. 
	Along the track starting from the right boundary, the ship moves nearly straight along the negative $x$-axis, and then it makes a clockwise full turn that includes a drift to negative $y$-values. It continues roughly parallel along the negative $x$-axis and then turns counter-clockwise with a drift back to near $y=0$. It appears that this motion repeats periodically. 
	
	\medskip
	In conclusion, choosing control gains outside the stability boundary, but not far from it, the resulting ship track still essentially follows the straight heading. For a yaw restoring gain which is not `too small', the ship tracks are also qualitatively unaffected by the alteration of the control law. However, for small $\eps_\psi>0$ the choice of modification enters into account and, as plotted in Fig.~\ref{periodic_orbit_earth2} (c), a track occurs which strongly depends on the sinusoidal choice, although the steering angle does not exceed $\pm29^\circ$.
}

\section{Discussion}\label{Discussion}
{\black 
	We have presented a new method to determine the criticality of Hopf bifurcations for a class of models for ship maneuvering whose nonlinearities contain terms that are continuous but not smooth. 
	This method replaces the smooth theory of center manifolds and normal forms, which is in general not applicable for bifurcations from straight ship motion. 
	As usual in the study of bifurcations, a good understanding of the eigenvalues of the linearization in the underlying steady state is required. For the selected class of models we have therefore performed a detailed analysis of gain margins at which the steady motion changes stability. Here we assumed a standard proportional control for the steering angle that consists of a combination of yaw damping and yaw restoring control. In this study we have been able to identify the geometry of this stability boundary in control parameter space in terms of the ship characteristics. We have shown that the propeller diameter has no qualitative impact, but the position $x_T$ of the propulsion force does. For the HTC characteristics we found that the stable region changes shape at specific locations upon moving $x_T$ further to the fore. More generally, four types of stable regions can occur; in one case the stable region is bounded and otherwise it is unbounded. Stabilization by the chosen control becomes impossible when $x_T$ is larger than an explicit threshold, which is either $x_{T_1}$ or $\xTs<x_{T_1}$ from \eqref{e:xTthresholds}. Here $x_{T_1}=N_\beta/Y_\beta$ is a particularly simple ratio of non-dimensional hydrodynamic hull coefficients. In all cases, if the straight motion can be stabilized at all, then this is already possible for zero yaw restoring gain, i.e., $\eps_\psi=0$.
	
	Our computations of the first Lyapunov coefficient 
	showed that the resulting Andronov--Hopf bifurcations are always supercritical. Also a pitchfork bifurcation that occurs at $\eps_\psi=0$ is supercritical. Checking this criticality satisfies a request for a nonlinear assessment of stabilizing control from \cite{PAPOULIAS1994}. 
	The supercriticality means that the linear stability boundary can be viewed as a safe prediction for the stabilization of the straight motion equilibrium. 
	As a refinement of this we have simultaneously computed the sensitivity of the amplitude of the bifurcating solutions. We have found that it is more sensitive to the yaw damping gain for small yaw restoring gain, except in the case that the stable region is bounded and the yaw damping is relatively large.
	
	In order to further corroborate that the bifurcating solutions do not interfere with the stabilized course, we have studied the bifurcations more globally using numerical continuation. We found that the bifurcating periodic orbits are indeed fully confined to the region in parameter space where the straight motion is unstable. For the purpose of understanding this organization of solution branches near zero yaw restoring gain, we were forced to modify the proportional control to be periodic in the yaw angle. This modification is not canonical and has an impact on the global behavior of the system. For the specific choice \eqref{e:sinus} we have presented some of the resulting ship tracks. Those for $\eps_\w\approx 0$ are strongly affected by this modification, but those for, e.g., $\eps_\w\geq1$ seemed to be qualitatively unaffected. 
	
	Towards completing this theoretical study, it would be desirable to unfold the arising double zero eigenvalue point. Challenges are to account for the cylindrical geometry, the non-generic character of the model at $\eps_\psi=0$, and the nonsmoothness of nonlinear terms. 
	
	\medskip
	Concerning the safety of stabilizing the straight motion, it would be interesting to check the criticality of bifurcations in other regimes of ship design parameters, or to change the underlying model. An option would be to consider the more complex rudder model from \cite{ToxopeusThesis}, which we simplified to the `thruster' model \eqref{thruster_forces}, or the Ro-Pax ship characteristics and/or the $4$ degree-of-freedom model from \cite{STC2007}. 
	
	\medskip
	Finally, an extension in a different direction would be to move beyond the ship-fixed equilibrium maneuvers and employ numerical continuation to effectively investigate different planned movements. As an example we have the zig-zag maneuvers \cite{ToxopeusPaper}, and the evaluation of so-called overshoots \cite{StandShips2006}. 
}

\appendix
\section{HTC Characteristics}
\label{HTC_param}

{\black The parameter values listed stem from \cite{ToxopeusThesis}. A part of it can already be found in \cite{ToxopeusPaper}.}

\begin{table}[H]
	\centering
	\begin{tabular}{ | p{1.0cm} | c || p{1.0cm} | c || p{1.0cm} | c || p{1.0cm} | c |  }
		\hline
		\multicolumn{8}{|c|}{ \textbf{Hull forces} } \\
		\hline
		\textbf{Coeff.} & \textbf{Value} & \textbf{Coeff.} & \textbf{Value} & \textbf{Coeff.} & \textbf{Value} & \textbf{Coeff.} & \textbf{Value}\\
		\hline
		$m$ 				& $0.2328$  & $m_{uu}$ & $0.0247$ & $X_{u|u|}$ 			& $-0.0141$ & $N_\beta$				& $-0.1442$ \\
		${I}_z$ 			& $0.0134$  & $m_{vv}$ & $0.2286$ & $Y_\beta$ 			& $-0.1735$ & $N_\gamma$ 			& $-0.0276$ \\
		& 			& $m_{rr}$ & $0.0150$ & $Y_\gamma$ 			& $0.0338$ 	& $N_{\beta|\beta|}$		& $-0.0375$ \\
		& 			& $m_{vr}$ & $0.0074$ & $Y_{\beta|\beta|}$  & $-1.1378$ & $N_{\gamma|\gamma|}$	& $-0.0386$ \\
		&			& $m_{rv}$ & $0.0074$ & $Y_{\gamma|\gamma|}$& $0.0123$  & 	&  \\
		& & & & $Y_{\beta|\gamma|}$	& $-0.0537$ & 					    & \\
		& & & & $Y_{|\beta|\gamma}$	& $0.1251$  & 						& \\
		\hline
	\end{tabular}
	\caption{Rescaled added mass coefficients, \eqref{4DSystem}, and hydrodynamic bare hull coefficients, \eqref{thruster_forces}.}
	\label{Mass_coeffs}
\end{table}

\begin{table}[!htbp]
	\centering
	\begin{tabular}{ | p{1.0cm} | c || p{1.0cm} | c || p{1.0cm} | c | }
		\hline
		\multicolumn{4}{|c|}{ \textbf{Propeller characteristics} } & \multicolumn{2}{c|}{  }\\
		\hline
		\textbf{Coeff.} & \textbf{Value} & \textbf{Coeff.} & \textbf{Value} & \textbf{Coeff.} & \textbf{Value}\\
		\hline
		&   		& $K_{T0}$ & $0.366897$ & $\L$ 			& $153.70$\\
		$\bar{D}_p$ &  $6.105$ 	& $K_{T1}$ & $-0.345036$ 	& $T$	& $10.30$\\
		&			& $K_{T2}$ & $0.068841$ & $t$ & $0.22$ \\
		& 			& $K_{T3}$ & $-0.710991$ & $w$ & $0.38$ \\
		&			& $K_{T4}$ & $0.948559$ & & \\
		&			& $K_{T5}$ & $-0.428915$ & & \\
		\hline
	\end{tabular}
	\caption{Propeller characteristics with propeller diameter $[\bar{D}_p]=m$, non-dimensional coefficients $K_{Ti}$ of the propeller thrust $T_p$, corresponding to the model No. 5286., and further parameters: length between perpendiculars $[\L]=m$, mean draft $[T]=m$, and non-dimensional thrust deduction fraction $t$ and wake fraction $w$.}
	\label{Prop_char}
\end{table}

\section*{Acknowledgments}
This research has been supported by the Deutsche Forschungsgemeinschaft (DFG, German Research Foundation), 
project 281474342/GRK2224/1.
We thank Ed van Daalen from MARIN for initiating this study and sharing reports as well as supporting the initial implementation in \textsc{Auto} together with the intern Antoine Anceau. We also thank Mathias Temmen for contributing to the initial implementation and linear stability study 
in the context of his Msc thesis. We are grateful to Thor I. Fossen, from the NTNU, for helpful discussions.

\bibliographystyle{siamplain}
{\small \bibliography{arXiv_Steinherr_Rademacher_P2}}

\end{document}